\newtheorem{thm}{Theorem}
\newtheorem{prop}{Proposition}
\newtheorem{lem}{Lemma}
\newtheorem{definition}{Definition}
\newtheorem{cor}{Corollary}
\begin{document}

\title{Some Generalizations of Hermite-Hadamard type Integral Inequalities and Their Applications}

\author{Muhammad Muddassar}
\address{Department of Mathematics\\University of Engineering and Technology, Taxila. Pakistan}
\email{malik.muddassar@gmail.com}
\author{Muhammad Iqbal Bhatti}
\address{Department of Mathematics\\University of Engineering and Technology, Lahore. Pakistan}
\email{uetzone@hotmail.com}
\date{\today}
\subjclass[2000]{26D10, 26D15, 26A51.}
\keywords{Hermite-Hadamard type inequality, $(h-(\alpha,m))$-Convex Function, H\"{o}lder inequality, Special means, Midpoint formula, Trapezoidal Formula}

\begin{abstract}
In this paper, we establish various inequalities for some differentiable mappings that are linked with the illustrious Hermite- Hadamard integral inequality for mappings whose derivatives are $(h-(\alpha,m))$-convex.The generalized integral inequalities contribute some better estimates than some already presented. The inequalities are then applied to numerical integration and some special means.
\end{abstract}
\maketitle
{\setcounter{section}{0}}
\markboth{\underline{\hspace{3.80in}M. Muddassar and M. I. Bhatti}}
{\underline{\hspace{1pt}Some Generalizations of Hermite-Hadamard type Integral Inequalities and Their Applications\hspace{1.40in}}}\pagestyle{myheadings}
\section{Introduction}\label{Sec1}
Let $f: I\subset\mathbb{R}\rightarrow\mathbb{R}$ be a function defined on the interval $I$ of real numbers. Then $f$ is called convex if
\begin{equation*}
    f(t\,x+(1-t)\,y)\leq\,\,t\,f(x)\,+\,\,(1-t)\,f(y)
\end{equation*}
for all $x,y\in I$ and $t\in [0,1]$. Geometrically, this means that if P, Q and R are three distinct points on graph of $f$ with Q between P and R, then Q is on or below chord PR. There are many results associated with convex functions in the area of inequalities, but one of those is the classical Hermite Hadamard inequality:
\begin{equation}\label{HH}
    f\left(\frac{a+b}{2}\right)\leq\frac{1}{b-a}\int_a^b f(x) dx\leq\frac{f(a)+f(b)}{2}
\end{equation}
for $a,b\in I,$ with $a<b$.\\
In \cite{r5}, H. Hudzik and L. Maligranda considered, among others, the class of functions which are $s-$convex in the first and second sense. This class is defined as follows:
\begin{definition}
A function $f:[0,\infty)\rightarrow\mathbb{R}$ is said to be $s-$convex or $f$ belongs to the class $K_s^i$
if
\begin{equation}\label{e1}
    f(\mu \,x + \nu \,y) \leq\,\, \mu^s\,f(x)\,+\, \nu^s\,f(y)
\end{equation}
holds for all $x,y \in [0,\infty),$ $\mu, \nu \in[0,1]$ and for some fixed $s\in(0,1]$.
\end{definition}
Note that, if $\mu^s + \nu^s=1$, the above class of convex functions is called $s$-convex functions in first sense and represented by $K_s^1$ and if $\mu + \nu=1$ the above class is called $s$-convex in second sense and represented by $K_s^2$.\\
It may be noted that every 1-convex function is convex. In the same paper \cite{r5} H. Hudzik and L. Maligranda discussed a few results connecting with $s-$convex functions in second sense and some new results about Hadamard's inequality for $s-$convex functions are discussed in \cite{r4}, while on the other hand there are many important inequalities connecting with 1-convex (convex) functions \cite{r4}, but one of these is $(\ref{HH}).$\\
In ~\cite{r9}, V.G. Mihesan presented the class of $(\alpha,m)$-convex functions as reproduced below:
\begin{definition}
The function $f:[0,b] \rightarrow \mathbb{R}$ is said to be $(\alpha,m)$-convex, where $(\alpha,m) \in [0,1]^2$, if for every $x,y \in [0,b]$ and $t \in [0,1]$ we have
$$f(tx + m(1 - t)y)  \leq t^\alpha f(x) + m(1 - t^\alpha)f(y)$$
\end{definition}
Note that for $(\alpha,m) \in \{(0,0) ,(\alpha,0), (1,0), (1,m), (1,1), (\alpha,1)\}$ one receives the following classes of functions respectively:  increasing, $\alpha$-starshaped, starshaped, $m$-convex, convex and $\alpha$-convex.\\
Denote by $K_m^\alpha(b)$ the set of all $(\alpha,m)$-convex functions on $[0, b]$ with $f(0) \leq 0$. For recent results and generalizations referring m-convex and  $(\alpha,m)$-convex functions see ~\cite{r1}, ~\cite{r2} and ~\cite{r15}.\\
M. Muddassar et. al., define a new class of convex functions in ~\cite{r10a} named as $s$-$(\alpha, m)$-convex functions as reproduced below
\begin{definition}
A function $f : [0, \infty) \rightarrow [0, \infty)$ is said to be $s$-$(\alpha,m)$-convex function in first sense or f belongs to the class ${K_{m, 1}^{\alpha, s}}$ , if for all $x, y \in [0, \infty)$ and $\mu \in  [0, 1]$, the following inequality holds:
\begin{equation*}
    f(\mu x + (1-\mu) y) \leq \left({\mu^\alpha}^s \right) f(x) + m \left(1-{\mu^\alpha}^s \right) f\left(\frac{y}{m}\right)
\end{equation*}
where $(\alpha,m) \in [0,1]^2$ and for some fixed $s \in (0, 1]$.
\end{definition}
\begin{definition}
A function $f : [0, \infty) \rightarrow [0, \infty)$ is said to be $s$-$(\alpha,m)$-convex function in second sense or f belongs to the class ${K_{m, 2}^{\alpha, s}}$ , if for all $x, y \in [0, \infty)$ and $\mu, \nu \in  [0, 1]$, the following inequality holds:
\begin{equation*}
    f(\mu x + (1-\mu) y) \leq \left(\mu^\alpha \right)^s f(x) + m \left(1-\mu^\alpha \right)^s f\left(\frac{y}{m}\right)
\end{equation*}
where $(\alpha,m) \in [0,1]^2$ and for some fixed $s \in (0, 1]$.
\end{definition}
Note that for $s=1$, we get $K_m^\alpha(I)$ class of convex functions and for $\alpha=1$ and $m=1$, we get $K_s^1(I)$ and $K_s^2(I)$ class of convex functions.\\
In ~\cite{r17}, S. Varo\v{s}anec define the following class of convex functions as reproduced below:
\begin{definition}
Let $h: \mathcal{J} \subseteq \mathbb{R} \rightarrow \mathbb{R}$ be a non-negative function, $h\neq0$. We say that $f: I\subseteq \mathbb{R} \rightarrow \mathbb{R}$ is an $h$-convex function (or that $f$ belongs to the class $SX(h, I)$) if $f$ is non-negative and for all $x, y \in I$, $\mu, \nu \in (0,1)$ and $\mu + \nu =1$, we have
\begin{equation*}
    f(\mu x + \nu y) \leq h(\mu)f(x) + h(\nu)f(y)
\end{equation*}
if the above inequality reversed, then $f$ is said to be $h$-concave (or $f \in SV(h, I)$).
\end{definition}
Evidently, if $h(\mu)= \mu$, then all non-negative convex functions belong to $SX(h, I)$ and all non-negative concave functions belong to $SV(h, I)$; if $h(\mu)= \frac{1}{\mu}$, then $SX(h, I) = Q(I)$; if $h(\mu)= 1$, then $P(I) \subseteq SX(h, I)$; and if $h(\mu)= \mu^s$, where $s \in(0,1]$, then $K_s^2 \subseteq SX(h, I)$.
In ~\cite{r15}, M. E. \"{O}zdemir et. al., define a new class of convex functions as below:
\begin{definition}
Let $h: \mathcal{J} \subseteq \mathbb{R} \rightarrow \mathbb{R}$ be a non-negative function, $h\neq0$. We say that $f: I\subseteq \mathbb{R} \rightarrow \mathbb{R}$ is an $(h-(\alpha,m))$-convex function (or that $f$ belongs to the class $SX((h_(\alpha,m)), I)$) if $f$ is non-negative and for all $x, y \in I$ and $\lambda \in (0,1)$ for $(\alpha,m) \in [0,1]^2$, we have
\begin{equation*}
    f(\lambda x + m (1-\lambda) y) \leq h^\alpha(\lambda)f(x) + m(1-h^\alpha(\lambda))f(y)
\end{equation*}
if the above inequality is reversed, then $f$ is said to be $(h-(\alpha,m),I)$-concave, \emph{i.e.}, $f\in SV(h-(\alpha,m), I)$.
\end{definition}
Evidently, if $h(\lambda)= \lambda$, then all non-negative convex functions belong to $K_m^\alpha(I)$.
In ~\cite{r4} S. S. Dragomir et al. discussed inequalities for differentiable and twice differentiable functions connecting with the H-H Inequality on the basis of the following Lemmas.
\begin{lem}\label{L1}
Let $f:I\subseteq\mathbb{R}\rightarrow\mathbb{R}$ be differentiable function on $I^\circ$ (interior of $I$), $a,b \in I$ with $a < b$. If $f' \in L^{1}([a,b])$, then we have
\begin{eqnarray}\label{e3}
&&\nonumber\!\!\!\!\!\!\!\!\!\!\!\!\!\!\!\!\!\!\!\!f\left(\frac{a+b}{2}\right)-\frac{1}{b-a}\int_a^b f(x) dx=\frac{(b-a)}{4}\!\int_0^1 \!\!(1-t) \left[f'\left(ta+(1-t)\frac{a+b}{2}\right)\right.\\
&&\indent\indent\indent\indent\indent\indent\indent\indent\indent\indent\indent\indent\indent\left.-f'\left(tb+(1-t)\frac{a+b}{2}\right)\right]dt
\end{eqnarray}
\end{lem}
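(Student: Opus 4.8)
The plan is to verify the claimed identity by evaluating its right-hand side directly, through integration by parts. Set
\[
\mathcal{I}_1=\int_0^1(1-t)\,f'\!\left(ta+(1-t)\tfrac{a+b}{2}\right)dt,\qquad
\mathcal{I}_2=\int_0^1(1-t)\,f'\!\left(tb+(1-t)\tfrac{a+b}{2}\right)dt,
\]
so that the right-hand side of $(\ref{e3})$ is $\tfrac{b-a}{4}(\mathcal{I}_1-\mathcal{I}_2)$. The key observation is that the argument of $f'$ in $\mathcal{I}_1$ is the affine map $t\mapsto\frac{a+b}{2}+t\,\frac{a-b}{2}$, whose $t$-derivative is the constant $\frac{a-b}{2}$, and likewise the argument in $\mathcal{I}_2$ is $\frac{a+b}{2}+t\,\frac{b-a}{2}$ with $t$-derivative $\frac{b-a}{2}$; this is exactly what makes $f'(\cdot)$ easy to antidifferentiate in $t$.

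First I would integrate $\mathcal{I}_1$ by parts, differentiating the polynomial factor $(1-t)$ and antidifferentiating $f'$ of the affine argument (an antiderivative being $\frac{2}{a-b}f(\frac{a+b}{2}+t\,\frac{a-b}{2})$). The boundary contribution at $t=1$ vanishes because of the factor $1-t$, the boundary contribution at $t=0$ equals $-\frac{2}{a-b}f(\frac{a+b}{2})$, and the leftover integral, after the substitution $x=\frac{a+b}{2}+t\,\frac{a-b}{2}$, becomes $\frac{4}{(b-a)^2}\int_{(a+b)/2}^{a}f(x)\,dx$. Running the identical argument on $\mathcal{I}_2$—where the affine argument now runs from $\frac{a+b}{2}$ to $b$—yields $\mathcal{I}_2=-\frac{2}{b-a}f(\frac{a+b}{2})+\frac{4}{(b-a)^2}\int_{(a+b)/2}^{b}f(x)\,dx$.

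Finally I would subtract the two expressions. Using $-\frac{2}{a-b}=\frac{2}{b-a}$ and $(a-b)^2=(b-a)^2$, the two occurrences of $f(\frac{a+b}{2})$ add up to $\frac{4}{b-a}f(\frac{a+b}{2})$, while $\int_{(a+b)/2}^{a}f-\int_{(a+b)/2}^{b}f=-\int_a^{(a+b)/2}f-\int_{(a+b)/2}^{b}f=-\int_a^{b}f$, so the integral terms combine to $-\frac{4}{(b-a)^2}\int_a^b f(x)\,dx$. Multiplying $\mathcal{I}_1-\mathcal{I}_2$ by $\frac{b-a}{4}$ then produces exactly $f(\frac{a+b}{2})-\frac{1}{b-a}\int_a^b f(x)\,dx$, which is $(\ref{e3})$. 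I expect the only genuine source of error to be sign and orientation bookkeeping: since $a<b$ we have $a-b<0$, so one must keep careful track of the sign of the antiderivative $\frac{2}{a-b}f(\cdot)$ and of the orientation of each integral produced by the substitutions. The hypotheses that $f$ is differentiable on $I^{\circ}$ and $f'\in L^{1}([a,b])$ are precisely what legitimizes the integration by parts and the change of variables.
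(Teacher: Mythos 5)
Your computation is correct: the two integrations by parts, the boundary terms, and the change of variables all check out, and multiplying $\mathcal{I}_1-\mathcal{I}_2$ by $\frac{b-a}{4}$ does yield the left-hand side of (\ref{e3}). The paper itself states Lemma \ref{L1} without proof, quoting it from the cited literature (\cite{r4}), and your integration-by-parts argument is exactly the standard derivation given there, so your proposal matches the intended proof.
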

In ~\cite{r3}, Dragomir and Agarwal established the following results connected with the right part of (\ref{e3}) as well as to apply them for some elementary inequalities for real numbers and numerical integration.
\begin{lem}\label{L2}
Let $f:I^\circ \subseteq \mathbb{R} \rightarrow \mathbb{R}$ be differentiable function on $I^\circ,$ $a,b\in I$ with $a<b$. If $f'\in L^{1}[a,b],$ then
\begin{equation}\label{e4}
\frac{f(a)+f(b)}{2}-\frac{1}{b-a}\int_a^b f(x)dx   = \frac{(b-a)^2}{2}\int_0^1 t(1-t)f''(ta+(1-t)b)dt
\end{equation}
\end{lem}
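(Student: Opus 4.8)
The plan is to establish $(\ref{e4})$ by computing the integral on its right-hand side directly, performing integration by parts twice so as to move the two derivatives off of $f$ and to absorb the weight $t(1-t)$. Set $I:=\int_0^1 t(1-t)f''(ta+(1-t)b)\,dt$. The device that drives everything is the chain rule applied to the affine substitution: $\frac{d}{dt}f'(ta+(1-t)b)=(a-b)f''(ta+(1-t)b)$, and likewise $\frac{d}{dt}f(ta+(1-t)b)=(a-b)f'(ta+(1-t)b)$.

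First I would integrate $I$ by parts taking $u=t(1-t)$ and $dv=f''(ta+(1-t)b)\,dt$, using the first identity to write $dv=\frac{1}{a-b}\,d\!\left[f'(ta+(1-t)b)\right]$. Because $t(1-t)$ vanishes at $t=0$ and $t=1$, the boundary contribution drops out and one is left with $I=\dfrac{-1}{a-b}\int_0^1(1-2t)f'(ta+(1-t)b)\,dt$.

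Next I would integrate this remaining integral by parts with $u=1-2t$ and $dv=f'(ta+(1-t)b)\,dt=\frac{1}{a-b}\,d\!\left[f(ta+(1-t)b)\right]$. This time the boundary term survives and equals $\frac{1}{a-b}\left(-f(a)-f(b)\right)$, while the leftover integral is $\frac{2}{a-b}\int_0^1 f(ta+(1-t)b)\,dt$. The affine change of variables $x=ta+(1-t)b$, $dx=(a-b)\,dt$ — which sends $t=0,1$ to $x=b,a$ — reduces this last integral to $\frac{1}{b-a}\int_a^b f(x)\,dx$.

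Assembling the three computations gives $I=\dfrac{1}{(b-a)^2}\left[f(a)+f(b)-\dfrac{2}{b-a}\int_a^b f(x)\,dx\right]$, and multiplying by $\frac{(b-a)^2}{2}$ produces exactly the left-hand side of $(\ref{e4})$. There is no analytic difficulty here; the only thing demanding care is the sign bookkeeping and the repeated appearance of the factor $(a-b)$ (versus $(b-a)$) each time the chain rule or the substitution is invoked, together with the implicit hypothesis that $f''$ is integrable on $[a,b]$ so that both integrations by parts are justified.
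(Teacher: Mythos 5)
Your computation is correct: integrating $\int_0^1 t(1-t)f''(ta+(1-t)b)\,dt$ by parts twice, with the weight $t(1-t)$ killing the first boundary term and the substitution $x=ta+(1-t)b$ finishing the job, assembles exactly to (\ref{e4}), and your sign bookkeeping with the recurring factor $(a-b)$ checks out. Note that the paper gives no proof of Lemma \ref{L2} at all (it is quoted from the literature), and your double integration by parts is precisely the standard derivation of such identities; you are also right to flag the hypotheses, since as printed the lemma assumes only $f'\in L^{1}[a,b]$ while the identity manifestly requires $f$ twice differentiable with $f''$ integrable on $[a,b]$ (equivalently $f'$ absolutely continuous), which is what justifies both integrations by parts.
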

Here We give definition of Beta function of Euler type which will be helpful in our next discussion, which is for $x,y>0$ defined as
$$\beta(x,y)= \frac{\Gamma(x).\Gamma(y)}{\Gamma(x+y)} = \int_0^1\,t^{x-1}\,(1-t)^{y-1}\,\,dt$$
This paper is in the direction of the results discussed in ~\cite{r6} but here we use $(h-(\alpha,m))$-convex functions instead of $s$-convex function. After this introduction, in section \ref{sec2} we found some new integral inequalities of the type of Hermite Hadamard's for generalized convex functions. In section \ref{sec3} we give some new applications of the results from section \ref{sec2} for some special means. The inequalities are then applied to numerical integration in section \ref{sec4}.
\section{Main Results}\label{sec2}
The following theorems were obtained by using the $(h-(\alpha,m))$-convex function.
\begin{thm}\label{T1}
Let $f:I^o\subseteq \mathbb{R}\rightarrow \mathbb{R}$ be a differentiable function on $I^o$ (interior of I), $a,b \in I$ with $a<b$. If $f' \in L^1[a,b]$. If the mapping $|f'|$ is $(h-(\alpha,m))$-convex on $[a,b]$, then
\begin{eqnarray}\label{Te1}
&&\!\!\!\!\!\!\!\!\!\!\!\!\!\!\!\!\!\nonumber\left|f\!\left(\!\frac{a+b}{2}\!\right)\!-\!\frac{1}{b-a}\!\int_a^b\!\! f(x)dx\right|\!\leq\! \frac{(b-a)}{4}\!\left[\!\left\{|f'(a)|+ |f'(b)| +2m\left|f'\!\left(\frac{a+b}{2m}\right)\right|\!\right\}\right.\\
&&\indent\indent\indent\indent\indent\indent\indent\indent\indent\indent\left.\int_0^1 (1-t)h^\alpha(t)dt+\frac{m}{2}\left|f'\left(\frac{a+b}{2m}\right)\right|\right]
\end{eqnarray}
\end{thm}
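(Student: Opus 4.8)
The plan is to follow the standard technique of Dragomir and Agarwal: start from the Montgomery-type identity in Lemma~\ref{L1}, pass to absolute values, and then bound the integrand using the $(h-(\alpha,m))$-convexity of $|f'|$. First I would apply the triangle inequality to the right-hand side of $(\ref{e3})$; since $1-t\ge 0$ on $[0,1]$ this gives
\[
\left|f\!\left(\tfrac{a+b}{2}\right)-\tfrac{1}{b-a}\int_a^b f(x)\,dx\right|\le \tfrac{b-a}{4}\int_0^1 (1-t)\Big[\big|f'\big(ta+(1-t)\tfrac{a+b}{2}\big)\big|+\big|f'\big(tb+(1-t)\tfrac{a+b}{2}\big)\big|\Big]\,dt .
\]

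The key step is to present the arguments of $f'$ in the exact affine form $\lambda x+m(1-\lambda)y$ required by the definition of an $(h-(\alpha,m))$-convex function. Observing that $ta+(1-t)\tfrac{a+b}{2}=t\cdot a+m(1-t)\cdot\tfrac{a+b}{2m}$ and, similarly, $tb+(1-t)\tfrac{a+b}{2}=t\cdot b+m(1-t)\cdot\tfrac{a+b}{2m}$, the hypothesis applied with $\lambda=t$, $y=\tfrac{a+b}{2m}$ and $x=a$ (resp.\ $x=b$), followed by addition, yields
\[
\big|f'\big(ta+(1-t)\tfrac{a+b}{2}\big)\big|+\big|f'\big(tb+(1-t)\tfrac{a+b}{2}\big)\big|\le h^\alpha(t)\big(|f'(a)|+|f'(b)|\big)+2m\big(1-h^\alpha(t)\big)\Big|f'\!\Big(\tfrac{a+b}{2m}\Big)\Big| .
\]

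Finally I would multiply this estimate by $1-t\ge 0$, integrate over $[0,1]$, and insert the result into the first display. The term carrying $|f'(a)|+|f'(b)|$ produces the factor $\int_0^1(1-t)h^\alpha(t)\,dt$ directly, while for the midpoint term one evaluates $\int_0^1(1-t)\,dt=\tfrac12$ and uses $0\le h^\alpha(t)\le1$ to control $\int_0^1(1-t)(1-h^\alpha(t))\,dt$; collecting the coefficients of $\int_0^1(1-t)h^\alpha(t)\,dt$ and of $\big|f'(\tfrac{a+b}{2m})\big|$ and restoring the factor $\tfrac{b-a}{4}$ gives $(\ref{Te1})$.

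The only point that requires care is the affine rescaling in the middle step: one must send the midpoint $\tfrac{a+b}{2}$ to $\tfrac{a+b}{2m}$ so that it can serve as the second variable $y$ in the convexity inequality, and one must keep the $(1-h^\alpha(t))$ weights straight so that all midpoint contributions consolidate into the single bracketed coefficient on the right of $(\ref{Te1})$. Apart from that, the argument is elementary integration and carries no real obstacle.
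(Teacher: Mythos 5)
Your strategy is exactly the paper's: take absolute values in Lemma \ref{L1}, rewrite $ta+(1-t)\frac{a+b}{2}=t\,a+m(1-t)\frac{a+b}{2m}$ (and likewise with $b$ in place of $a$) so that the $(h-(\alpha,m))$-convexity of $|f'|$ can be applied with $y=\frac{a+b}{2m}$, then multiply by $1-t$ and integrate; this is precisely the chain (\ref{t1a})--(\ref{t1b}) in the paper, and your convexity estimate is correct.

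The problem is the final step, where you assert that ``collecting the coefficients \dots gives (\ref{Te1})''. Write $I_h=\int_0^1(1-t)h^\alpha(t)\,dt$. Carrying out the integration of your penultimate display exactly gives
\begin{align*}
\int_0^1(1-t)\Big[h^\alpha(t)\big(|f'(a)|+|f'(b)|\big)+2m\big(1-h^\alpha(t)\big)\big|f'\big(\tfrac{a+b}{2m}\big)\big|\Big]dt
=\Big\{|f'(a)|+|f'(b)|-2m\big|f'\big(\tfrac{a+b}{2m}\big)\big|\Big\}I_h+m\big|f'\big(\tfrac{a+b}{2m}\big)\big|,
\end{align*}
so after restoring $\frac{b-a}{4}$ the midpoint term enters the brace with a \emph{minus} sign and trailing constant $m$ (exactly the pattern of the bracket in Theorem \ref{T3}), whereas the bracket printed in (\ref{Te1}) is $\{|f'(a)|+|f'(b)|+2m|f'(\frac{a+b}{2m})|\}I_h+\frac{m}{2}|f'(\frac{a+b}{2m})|$. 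The exact bound implies the printed one only when $I_h\ge\frac{1}{8}$, and your cruder route of bounding $\int_0^1(1-t)\big(1-h^\alpha(t)\big)dt\le\frac{1}{2}$ yields $\{|f'(a)|+|f'(b)|\}I_h+m|f'(\frac{a+b}{2m})|$, which implies the printed form only when $I_h\ge\frac{1}{4}$; neither condition holds for an arbitrary non-negative $h$. So the defect is not in your method --- the paper's own proof stops at (\ref{t1b}) without evaluating the integrals and exhibits the same mismatch with its stated conclusion --- but in the claim that the bookkeeping produces (\ref{Te1}) as printed: what your argument actually establishes is the version of (\ref{Te1}) with the brace $\{|f'(a)|+|f'(b)|-2m|f'(\frac{a+b}{2m})|\}$ and trailing term $m|f'(\frac{a+b}{2m})|$, and you should either state that version or record explicitly the extra condition on $h$ (such as $I_h\ge\frac{1}{8}$) under which the printed inequality follows.
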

\begin{proof}
Taking modulus on both sides of  lemma \ref{L1}, we get
\begin{eqnarray}\label{t1a}
&&\nonumber\!\!\!\!\!\!\!\!\!\!\!\!\!\!\!\!\!\!\!\left|f\left(\frac{a+b}{2}\right)-\frac{1}{b-a}\int_a^b f(x)dx\right|\leq \frac{b-a}{4} \int_0^1 \left|1-t\right|\left|f'\left(ta+(1-t)\frac{a+b}{2}\right)\right.\\
&&\nonumber\indent\indent\indent\indent\indent\indent\indent\indent\indent\indent\indent\indent\indent\indent\left.-f'\left(tb+(1-t)\frac{a+b}{2}\right)\right|dt\\
&& \indent\indent\indent\indent\indent\indent\indent\indent\nonumber = \frac{b-a}{4} \left\{\int_0^1 (1-t) \left|f'\left(ta+(1-t)\frac{a+b}{2}\right)\right|dt\right.\\
&&\indent\indent\indent\indent\indent\indent\indent\indent\indent\indent\left.+\int_0^1(1-t)\left|f'\left(tb+(1-t)\frac{a+b}{2}\right)\right|dt\right\}
\end{eqnarray}
Since the mapping $|f'|$ is $(h-(\alpha,m))$ convex on $[a,b]$, then
\begin{equation*}
\left|f'\left(tx+(1-t)y\right)\right|\leq h^\alpha(t)\left|f'(x)\right|+m(1-h^\alpha(t))\left|f'(\frac{y}{m})\right|
\end{equation*}
Inequation (\ref{t1a}) becomes
\begin{eqnarray}\label{t1b}
&&\nonumber\!\!\!\!\!\!\!\!\!\!\!\!\!\!\!\!\!\!\!\left|f\left(\frac{a+b}{2}\right)-\frac{1}{b-a}\int_a^b f(x)dx\right|\leq \frac{b-a}{4} \left[\int_0^1 (1-t)\left\{\left|f'(a)\right|h^\alpha(t)+m\left|f'\left(\frac{a+b}{2m}\right)\right|\right.\right.\\
&&\!\!\!\!\!\!\!\!\!\!\!\left.\left.\left(1-h^\alpha(t)\right)\right\}dt+\int_0^1 (1-t)\left\{\left|f'(b)\right|h^\alpha(t)+m\left|f'\left(\frac{a+b}{2m}\right)\right|\left(1-h^\alpha(t)\right)\right\}dt\right]
\end{eqnarray}
which completes the proof.
\end{proof}
\begin{thm}\label{T2}
Let $f:I^o\subseteq \mathbb{R}\rightarrow \mathbb{R}$ be a differentiable function on $I^o$ (interior of I), $a,b \in I$ with $a<b$. If $f' \in L^1[a,b]$. If the mapping $|f'|^q$ is $(h-(\alpha,m))$-convex on $[a,b]$, then
\begin{eqnarray}\label{Te2}
&&\!\!\!\!\!\!\!\!\!\!\!\!\!\nonumber\left|f\left(\frac{a+b}{2}\right)-\frac{1}{b-a}\int_a^b f(x)dx\right|\leq \frac{(b-a)}{4(p+1)^{\frac{1}{p}}}\left[\left\{\left(\left|f'(a)\right|^q-m\left|f'\left(\frac{a+b}{2m}\right)\right|^q\right)\times\right.\right.\\
&&\indent\indent\nonumber\left.\left.\int_0^1 h^\alpha(t)dt+m\left|f'\left(\frac{a+b}{2m}\right)\right|^q\right\}^{\frac{1}{q}}+\left\{\left(\left|f'(b)\right|^q-m\left|f'\left(\frac{a+b}{2m}\right)\right|^q\right)\times\right.\right.\\
&&\indent\indent\indent\indent\indent\indent\indent\indent\indent\indent\indent\indent\indent\left.\left.\int_0^1 h^\alpha(t)dt+m\left|f'\left(\frac{a+b}{2m}\right)\right|^q\right\}^{\frac{1}{q}}\right]
\end{eqnarray}
\end{thm}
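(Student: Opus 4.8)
The plan is to follow the proof of Theorem~\ref{T1} as far as the decomposition of the integral, and only then to interpose H\"older's inequality before using the convexity of $|f'|^q$. (Here, as is standard in this circle of results, $p$ and $q$ are conjugate exponents, $\frac1p+\frac1q=1$ with $q>1$; this is implicit in the presence of $(p+1)^{1/p}$ and of $|f'|^q$ in the statement.) Starting from Lemma~\ref{L1}, taking moduli and splitting the difference of derivatives exactly as in \eqref{t1a}, one reaches
\[
\left|f\!\left(\tfrac{a+b}{2}\right)-\tfrac{1}{b-a}\!\int_a^b\! f(x)\,dx\right|\le \frac{b-a}{4}\Big\{I_a+I_b\Big\},
\]
where $I_a=\int_0^1(1-t)\big|f'(ta+(1-t)\tfrac{a+b}{2})\big|\,dt$ and $I_b=\int_0^1(1-t)\big|f'(tb+(1-t)\tfrac{a+b}{2})\big|\,dt$.

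Next I would estimate $I_a$ (and symmetrically $I_b$) by H\"older's inequality, pairing the nonnegative factor $(1-t)$ with the factor $|f'(\cdot)|$:
\[
I_a\le\left(\int_0^1(1-t)^p\,dt\right)^{1/p}\left(\int_0^1\big|f'(ta+(1-t)\tfrac{a+b}{2})\big|^q\,dt\right)^{1/q}=\frac{1}{(p+1)^{1/p}}\left(\int_0^1\big|f'(ta+(1-t)\tfrac{a+b}{2})\big|^q\,dt\right)^{1/q},
\]
since $\int_0^1(1-t)^p\,dt=\frac{1}{p+1}$. Now comes the one point that needs a little care: to apply $(h-(\alpha,m))$-convexity of $|f'|^q$ I write $\tfrac{a+b}{2}=m\cdot\tfrac{a+b}{2m}$, so that $ta+(1-t)\tfrac{a+b}{2}=ta+m(1-t)\tfrac{a+b}{2m}$, and the definition of the class yields the pointwise bound
\[
\big|f'(ta+(1-t)\tfrac{a+b}{2})\big|^q\le h^\alpha(t)\,|f'(a)|^q+m\big(1-h^\alpha(t)\big)\Big|f'\!\left(\tfrac{a+b}{2m}\right)\Big|^q.
\]
Integrating over $[0,1]$ and using $\int_0^1 m\big(1-h^\alpha(t)\big)\,dt=m-m\int_0^1 h^\alpha(t)\,dt$ collapses the right-hand side to exactly $\big(|f'(a)|^q-m|f'(\tfrac{a+b}{2m})|^q\big)\int_0^1 h^\alpha(t)\,dt+m|f'(\tfrac{a+b}{2m})|^q$, which is the bracket in \eqref{Te2}; the identical computation with $b$ in place of $a$ handles $I_b$.

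Finally I would assemble the pieces: multiplying the two resulting bounds by $\frac{b-a}{4}\cdot\frac{1}{(p+1)^{1/p}}$ and adding gives precisely \eqref{Te2}. I do not anticipate any genuine obstacle — the argument is a routine composition of Lemma~\ref{L1}, H\"older's inequality, and the defining inequality of the class of $(h-(\alpha,m))$-convex functions. The only things to watch are the conjugacy convention on $p,q$ noted above and the substitution $y=\tfrac{a+b}{2m}$ (rather than $\tfrac{a+b}{2}$) so that the coefficient $m$ lands in the right place; note also that one does not need $1-h^\alpha(t)\ge 0$, since the pointwise bound coming from the definition is simply integrated as an inequality between real numbers.
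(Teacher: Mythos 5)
Your proposal is correct and follows essentially the same route as the paper's own proof: take moduli in Lemma \ref{L1} to reach \eqref{t1a}, apply H\"older's inequality with the split $(1-t)\cdot|f'(\cdot)|$ so that $\int_0^1(1-t)^p\,dt=\frac{1}{p+1}$ appears, then use the $(h-(\alpha,m))$-convexity of $|f'|^q$ (with $\frac{a+b}{2}=m\cdot\frac{a+b}{2m}$) and integrate. Your remark that the pointwise convexity bound is an inequality to be integrated (the paper writes the corresponding step \eqref{t2c} as an equality) is a small but accurate refinement of the same argument.
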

\begin{proof}
By applying the H\"{o}lder's Integral Inequality on the first integral in the right of (\ref{t1a}), we get
\begin{eqnarray}\label{t2a}
&&\nonumber\!\!\!\!\!\!\!\!\!\!\!\!\!\!\!\!\!\!\!\!\!\!\!\!\!\!\!\!\!\int_0^1 (1-t)\left|f'\left(ta+(1-t)\frac{a+b}{2}\right)\right|dt \leq \left(\int_0^1 (1-t)^pdt\right)^{\frac{1}{p}}\\
&&\indent\indent\indent\indent\indent\indent\indent\indent\indent\left(\int_0^1\left|f'\left(ta+(1-t)\frac{a+b}{2}\right)\right|^qdt\right)^{\frac{1}{q}}
\end{eqnarray}
Here
\begin{equation}\label{t2b}
\int_0^1 (1-t)^pdt=\frac{1}{p+1}
\end{equation}
and
\begin{eqnarray}\label{t2c}
&&\nonumber\!\!\!\!\!\!\!\!\!\!\!\!\!\!\!\!\!\!\!\!\!\!\!\!\!\!\!\!\!\int_0^1\left|f'\left(ta+(1-t)\frac{a+b}{2}\right)\right|^qdt=\left|f'(a)\right|^q\int_0^1 h^\alpha(t)dt\\
&&\indent\indent\indent\indent\indent\indent\indent\indent+m\left|f'\left(\frac{a+b}{2m}\right)\right|^q\int_0^1\left(1-h^\alpha(t)\right)dt
\end{eqnarray}
Using the inequalities (\ref{t2b}) and (\ref{t2c}), the inequality (\ref{t2a}) turns to
\begin{eqnarray}\label{t2d}
&&\nonumber\!\!\!\!\!\!\!\!\!\!\!\!\!\!\!\!\!\!\!\!\!\!\int_0^1 (1-t)\left|f'\left(ta+(1-t)\frac{a+b}{2}\right)\right|dt \leq \left(\frac{1}{p+1}\right)^{\frac{1}{p}}\times\\
&&\indent\indent\indent\left(\left|f'(a)\right|^q\int_0^1 h^\alpha(t)dt+m\left|f'\left(\frac{a+b}{2m}\right)\right|^q\int_0^1\left(1-h^\alpha(t)\right)dt\right)^{\frac{1}{q}}
\end{eqnarray}
similarly
\begin{eqnarray}\label{t2e}
&&\nonumber\!\!\!\!\!\!\!\!\!\!\!\!\!\!\!\!\!\!\!\!\!\!\int_0^1 (1-t)\left|f'\left(tb+(1-t)\frac{a+b}{2}\right)\right|dt \leq \left(\frac{1}{p+1}\right)^{\frac{1}{p}}\times\\
&&\indent\indent\indent\left(\left|f'(b)\right|^q\int_0^1 h^\alpha(t)dt+m\left|f'\left(\frac{a+b}{2m}\right)\right|^q\int_0^1\left(1-h^\alpha(t)\right)dt\right)^{\frac{1}{q}}
\end{eqnarray}
which completes the proof.
\end{proof}
\begin{cor}\label{cor1}
Let $f:I^o\subseteq \mathbb{R}\rightarrow \mathbb{R}$ be a differentiable function on $I^o$ (interior of I), $a,b \in I$ with $a<b$. If $f' \in L^1[a,b]$. If the mapping $|f'|^q$ is $(h-(\alpha,m))$-convex on $[a,b]$, then
\begin{eqnarray}\label{cr1}
&&\!\!\!\!\!\!\!\!\!\!\!\!\!\nonumber\left|f\left(\frac{a+b}{2}\right)-\frac{1}{b-a}\int_a^b f(x)dx\right|\leq \frac{(b-a)}{4(p+1)^{\frac{1}{p}}}\left[\left\{\left|f'(a)\right|+\left|f'(b)\right|-2m\left|f'\left(\frac{a+b}{2m}\right)\right|\right\}\right.\\
&&\indent\indent\indent\indent\indent\indent\indent\indent\indent\indent\indent\indent\indent\indent\indent\left.\int_0^1 h^{\frac{\alpha}{q}}(t)dt+2m\left|f'\left(\frac{a+b}{2m}\right)\right|\right]
\end{eqnarray}
\end{cor}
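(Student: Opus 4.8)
\emph{Plan of proof.} The idea is to deduce Corollary~\ref{cor1} from Theorem~\ref{T2} by collapsing the two $q$-th root terms on the right-hand side of \eqref{Te2} into a single linear combination of $|f'(a)|$, $|f'(b)|$ and $\left|f'\left(\frac{a+b}{2m}\right)\right|$. Since the H\"older exponents are conjugate we have $q>1$, hence $0<\frac{1}{q}<1$ and the map $t\mapsto t^{1/q}$ is concave on $[0,\infty)$ with value $0$ at $0$, so it is subadditive: $(u+v)^{1/q}\le u^{1/q}+v^{1/q}$ for all $u,v\ge0$. This elementary inequality is the workhorse of the argument.

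Concretely, I would write each bracketed term of \eqref{Te2} as $\{AK+B\}^{1/q}$, where $K=\int_0^1 h^\alpha(t)\,dt$, $B=m\left|f'\left(\frac{a+b}{2m}\right)\right|^q$, and $A=|f'(a)|^q-B$ (and the analogous term with $b$ in place of $a$). Subadditivity splits this as $\{AK+B\}^{1/q}\le K^{1/q}A^{1/q}+m^{1/q}\left|f'\left(\frac{a+b}{2m}\right)\right|$. One then passes from $K^{1/q}=\left(\int_0^1 h^\alpha(t)\,dt\right)^{1/q}$ to $\int_0^1 h^{\alpha/q}(t)\,dt$, from $A^{1/q}$ to the linear difference $|f'(a)|-m\left|f'\left(\frac{a+b}{2m}\right)\right|$, and combines the two copies of $m^{1/q}\left|f'\left(\frac{a+b}{2m}\right)\right|$ (from the $a$- and $b$-terms) into the single summand $2m\left|f'\left(\frac{a+b}{2m}\right)\right|$; adding everything and carrying the prefactor $\frac{b-a}{4(p+1)^{1/p}}$ through unchanged gives exactly \eqref{cr1}. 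An essentially equivalent route bypasses Theorem~\ref{T2}: start from \eqref{t1a}, apply H\"older as in \eqref{t2a}--\eqref{t2b} to pull out $(p+1)^{-1/p}$, and then estimate $\left(\int_0^1\left|f'\left(ta+(1-t)\frac{a+b}{2}\right)\right|^q dt\right)^{1/q}$ by using the $(h-(\alpha,m))$-convexity of $|f'|^q$ pointwise in $t$, taking the $q$-th root through the two-term bound, and integrating.

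The step I expect to be the genuine obstacle is precisely this passage from the ``$q$-th power inside'' quantities of Theorem~\ref{T2} to the ``$q$-th root inside'' quantities of the Corollary: one must replace $\left(\int_0^1 h^\alpha(t)\,dt\right)^{1/q}$ by $\int_0^1 h^{\alpha/q}(t)\,dt$, and similarly reconcile $m^{1/q}$ with $m$ and $(1-h^\alpha(t))^{1/q}$ with $1-h^{\alpha/q}(t)$ under the integral sign. These are exactly the places where the direction of the relevant concavity estimate is decisive --- a crude appeal to Jensen's inequality points the other way, since $\int_0^1 h^{\alpha/q}(t)\,dt\le\left(\int_0^1 h^\alpha(t)\,dt\right)^{1/q}$ and $m^{1/q}\ge m$ for $m\in[0,1]$ --- so a fully rigorous version either restricts to the degenerate range of $h,\alpha,m$ where these become equalities or keeps the sharper quantities $\left(\int_0^1 h^\alpha(t)\,dt\right)^{1/q}$ and $m^{1/q}$ in the final estimate. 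Granting this bookkeeping, the remainder is a routine substitution into the bound of Theorem~\ref{T2}.
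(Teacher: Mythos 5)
Your route is essentially the paper's own: its proof of Corollary~\ref{cor1} consists precisely of the remark that one argues as in Theorem~\ref{T2} and then invokes the fact $\left(\Phi+\Psi\right)^r\le \Phi^r+\Psi^r$ for $0<r<1$, which is exactly the subadditivity of $t\mapsto t^{1/q}$ that drives your argument. The obstacles you single out are genuine and are not resolved in the paper either: after subadditivity one is left with $\left(\int_0^1 h^{\alpha}(t)\,dt\right)^{1/q}$, $m^{1/q}$ and $\left(\left|f'(a)\right|^q-m\left|f'\left(\frac{a+b}{2m}\right)\right|^q\right)^{1/q}$, and the passages to $\int_0^1 h^{\alpha/q}(t)\,dt$, to $m$, and to the linear difference $\left|f'(a)\right|-m\left|f'\left(\frac{a+b}{2m}\right)\right|$ go the wrong way (or are sign-ambiguous) for an upper bound, exactly as you note via Jensen; the paper's one-line proof simply does not address this bookkeeping. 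So your proposal matches the paper's method, and beyond it correctly identifies where the stated form \eqref{cr1} would need either the sharper quantities retained or additional restrictions on $h$, $\alpha$, $m$ to be fully justified.
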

\begin{proof}
Proof is very similar to the above theorem but at the end we use the following fact:\\
 $\sum_{m=1}^{n-1} \left(\Phi_m+\Psi_m \right)^r \leq \sum_{m=1}^{n-1} (\Phi_m)^r +\sum_{m=1}^{n-1} (\Psi_m)^r$   for $(0<r<1)$ and for each $m$ both $\Phi_m,\Psi_m \geq 0$
\end{proof}
\begin{thm}\label{T3}
Let $f:I^o\subseteq \mathbb{R}\rightarrow \mathbb{R}$ be a differentiable function on $I^o$ (interior of I), $a,b \in I$ with $a<b$. If $f' \in L^1[a,b]$. If the mapping $|f'|^q$ is $(h-(\alpha,m))$-convex on $[a,b]$, then
\begin{eqnarray}\label{Te3}
&&\!\!\!\!\!\!\!\!\!\!\!\!\!\!\nonumber\left|f\left(\frac{a+b}{2}\right)-\frac{1}{b-a}\int_a^b f(x)dx\right|\leq \frac{(b-a)}{2^{\frac{2p+1}{p}}}\left[\left(\left\{\left|f'(a)\right|^q - m\left|f'\left(\frac{a+b}{2m}\right)\right|^q\right\}\right.\right.\\
&&\nonumber\left.\left.\int_0^1(1-t)h^\alpha(t)dt+\frac{m}{2}\left|f'\left(\frac{a+b}{2m}\right)\right|^q\right)^{\frac{1}{q}}+\left(\left\{\left|f'(b)\right|^q - m\left|f'\left(\frac{a+b}{2m}\right)\right|^q\right\}\right.\right.\\
&&\indent\indent\indent\indent\indent\indent\indent\indent\indent\left.\left.\int_0^1(1-t)h^\alpha(t)dt+\frac{m}{2}\left|f'\left(\frac{a+b}{2m}\right)\right|^q\right)^{\frac{1}{q}}\right]
\end{eqnarray}
\end{thm}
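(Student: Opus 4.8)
The plan is to follow the argument of Theorem~\ref{T2} verbatim up to the splitting step, and then to replace H\"older's inequality by the power-mean inequality, which has the advantage of keeping the weight $(1-t)$ in \emph{both} factors. Concretely, taking absolute values in Lemma~\ref{L1} (exactly as at the start of the proof of Theorem~\ref{T1}) one reaches the estimate~(\ref{t1a}), i.e. the left-hand side of~(\ref{Te3}) is bounded by
$$\frac{b-a}{4}\left\{\int_0^1(1-t)\left|f'\!\left(ta+(1-t)\tfrac{a+b}{2}\right)\right|dt+\int_0^1(1-t)\left|f'\!\left(tb+(1-t)\tfrac{a+b}{2}\right)\right|dt\right\}.$$

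To each of these two integrals I would apply the power-mean inequality in the form
$$\int_0^1(1-t)\,g(t)\,dt\le\left(\int_0^1(1-t)\,dt\right)^{1-\frac1q}\left(\int_0^1(1-t)\,g(t)^q\,dt\right)^{\frac1q},\qquad q\ge1,$$
with $g(t)=\bigl|f'(ta+(1-t)\tfrac{a+b}{2})\bigr|$, and similarly with $b$ in place of $a$. Since $\int_0^1(1-t)\,dt=\tfrac12$ and $p,q$ are conjugate, so that $1-\tfrac1q=\tfrac1p$, the prefactor collapses to $\frac{b-a}{4}\bigl(\tfrac12\bigr)^{1/p}=\frac{b-a}{2^{(2p+1)/p}}$, which is precisely the constant in~(\ref{Te3}).

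It remains to evaluate the $q$-th power integrals. Using that $|f'|^q$ is $(h-(\alpha,m))$-convex (applied with $y=\tfrac{a+b}{2}$, so that $y/m=\tfrac{a+b}{2m}$) one gets the pointwise bound $\bigl|f'(ta+(1-t)\tfrac{a+b}{2})\bigr|^q\le h^\alpha(t)|f'(a)|^q+m(1-h^\alpha(t))\bigl|f'(\tfrac{a+b}{2m})\bigr|^q$; multiplying by $(1-t)$, integrating over $[0,1]$, and using $\int_0^1(1-t)(1-h^\alpha(t))\,dt=\tfrac12-\int_0^1(1-t)h^\alpha(t)\,dt$ yields
$$\int_0^1(1-t)\left|f'\!\left(ta+(1-t)\tfrac{a+b}{2}\right)\right|^q dt\le\left(|f'(a)|^q-m\left|f'\!\left(\tfrac{a+b}{2m}\right)\right|^q\right)\int_0^1(1-t)h^\alpha(t)\,dt+\frac m2\left|f'\!\left(\tfrac{a+b}{2m}\right)\right|^q,$$
together with the analogous inequality with $a$ replaced by $b$. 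Substituting these two bounds back into the split estimate and pulling out the common constant gives~(\ref{Te3}).

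I do not expect any genuine analytic obstacle: every integral appearing is either elementary (the weight integral $\int_0^1(1-t)\,dt=\tfrac12$ and $\int_0^1(1-t)h^\alpha(t)\,dt$, the latter simply carried along in closed form exactly as in Theorems~\ref{T1} and~\ref{T2}), and the only points requiring a little care are the restriction $q\ge1$ needed for the power-mean step and the routine exponent bookkeeping verifying $\frac{b-a}{4}\cdot2^{-1/p}=\frac{b-a}{2^{(2p+1)/p}}$.
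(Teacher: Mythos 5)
Your proposal is correct and is essentially the paper's own argument: the paper also starts from (\ref{t1a}) and applies H\"older's inequality in exactly the weighted (power-mean) form you describe, writing $(1-t)=(1-t)^{1/p}(1-t)^{1/q}$ so the factor $\left(\int_0^1(1-t)\,dt\right)^{1/p}=\left(\tfrac12\right)^{1/p}$ appears and the weight stays in the $q$-th power integral, which is then evaluated via the $(h-(\alpha,m))$-convexity of $|f'|^q$ just as you do. The constants and the final bookkeeping coincide with (\ref{t3a}) and (\ref{t3e}), so there is nothing to add.
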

\begin{proof}
By applying the H\"{o}lder's Integral Inequality on the first integral in the right of (\ref{t1a}), we get
\begin{eqnarray}\label{t3a}
&&\nonumber\!\!\!\!\!\!\!\!\!\!\!\!\!\!\!\!\!\!\!\int_0^1 (1-t)\left|f'\left(ta+(1-t)\frac{a+b}{2}\right)\right|dt \leq \left(\int_0^1 (1-t)dt\right)^{\frac{1}{p}}\\
&&\nonumber\indent\indent\indent\indent\indent\indent\indent\indent\indent\left(\int_0^1(1-t)\left|f'\left(ta+(1-t)\frac{a+b}{2}\right)\right|^qdt\right)^{\frac{1}{q}}\\
&&\nonumber = \left(\frac{1}{2}\right)^{\frac{1}{p}}\left(\int_0^1(1-t)\left[\left(\left|f'(a)\right|^q - m\left|f'\left(\frac{a+b}{2m}\right)\right|^q\right)h^\alpha(t)+m\left|f'\left(\frac{a+b}{2m}\right)\right|^q\right]dt\right)^{\frac{1}{q}}\\
&& =\! \left(\frac{1}{2}\right)^{\frac{1}{p}}\!\left(\left\{\left|f'(a)\right|^q - m\left|f'\left(\frac{a+b}{2m}\right)\right|^q\right\}\int_0^1\!(1-t)h^\alpha(t)dt+\frac{m}{2}\left|f'\left(\frac{a+b}{2m}\right)\right|^q\right)^{\frac{1}{q}}
\end{eqnarray}
similarly
\begin{eqnarray}\label{t3e}
&&\nonumber\!\!\!\!\!\!\!\!\!\!\!\!\!\!\!\!\!\!\int_0^1 (1-t)\left|f'\left(tb+(1-t)\frac{a+b}{2}\right)\right|dt \leq \left(\frac{1}{2}\right)^{\frac{1}{p}}\left(\left\{\left|f'(b)\right|^q - m\left|f'\left(\frac{a+b}{2m}\right)\right|^q\right\}\right.\\
&&\indent\indent\indent\indent\indent\indent\indent\indent\indent\indent\left.\int_0^1(1-t)h^\alpha(t)dt+\frac{m}{2}\left|f'\left(\frac{a+b}{2m}\right)\right|^q\right)^{\frac{1}{q}}
\end{eqnarray}
which completes the proof.
\end{proof}\\
Variants of these results for twice differentiable functions are given below. These can be proved in a similar way based on Lemma \ref{L2}.
\begin{cor}\label{cor2}
Let $f:I^o\subseteq \mathbb{R}\rightarrow \mathbb{R}$ be a differentiable function on $I^o$ (interior of I), $a,b \in I$ with $a<b$. If $f' \in L^1[a,b]$. If the mapping $|f'|^q$ is $(h-(\alpha,m))$-convex on $[a,b]$, then
\begin{eqnarray}\label{cr2}
&&\!\!\!\!\!\!\!\!\!\!\!\!\!\nonumber\left|f\left(\frac{a+b}{2}\right)-\frac{1}{b-a}\int_a^b f(x)dx\right|\leq \frac{(b-a)}{2^{\frac{2p+1}{p}}}\left[\left\{\left|f'(a)\right|+\left|f'(b)\right|-2m\left|f'\left(\frac{a+b}{2m}\right)\right|\right\}\right.\\
&&\indent\indent\indent\indent\indent\indent\indent\indent\indent\indent\left.\int_0^1 \left(1-\frac{t}{q} \right)h^{\frac{\alpha}{q}}(t)dt+m\left|f'\left(\frac{a+b}{2m}\right)\right|\right]
\end{eqnarray}
\end{cor}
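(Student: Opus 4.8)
Although the sentence preceding the statement mentions Lemma \ref{L2} and twice‑differentiable functions, the inequality (\ref{cr2}) involves only $|f'|^{q}$ and carries the very same constant $\tfrac{b-a}{2^{(2p+1)/p}}$ as Theorem \ref{T3}; so the plan is to read it off Theorem \ref{T3} exactly as Corollary \ref{cor1} is read off Theorem \ref{T2}. Concretely, I would re‑run the proof of Theorem \ref{T3} through the two H\"{o}lder estimates (\ref{t3a})--(\ref{t3e}) and then, instead of stopping there, break up the resulting brackets with the elementary root‑subadditivity inequality $(\Phi+\Psi)^{r}\le\Phi^{r}+\Psi^{r}$ for $0<r<1$ and $\Phi,\Psi\ge0$ (the same fact invoked in Corollary \ref{cor1}), applied with $r=1/q$.

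In detail, write $C:=|f'(\tfrac{a+b}{2m})|$, $\Phi_a:=|f'(a)|^{q}-mC^{q}$, $\Phi_b:=|f'(b)|^{q}-mC^{q}$ and $J:=\int_0^1(1-t)h^{\alpha}(t)\,dt$. By (\ref{t3a}) the first integral on the right of (\ref{t1a}) is at most $(\tfrac12)^{1/p}(\Phi_a J+\tfrac{m}{2}C^{q})^{1/q}$, and similarly for the $b$‑integral with $\Phi_b$ in place of $\Phi_a$. Applying root‑subadditivity to detach the summand $\tfrac{m}{2}C^{q}$ turns $(\Phi_a J+\tfrac{m}{2}C^{q})^{1/q}$ into at most $\Phi_a^{1/q}J^{1/q}+(\tfrac{m}{2})^{1/q}C$. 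The surviving factor $J^{1/q}$ I would then bound above by $\int_0^1(1-\tfrac{t}{q})h^{\alpha/q}(t)\,dt$: Bernoulli's inequality gives $(1-t)^{1/q}\le1-\tfrac{t}{q}$ (legitimate since $1/q\in(0,1]$), and a power‑mean estimate pushes the exponent $1/q$ through the integral and onto $h^{\alpha}$, producing precisely the integrand $(1-\tfrac{t}{q})h^{\alpha/q}(t)$ of (\ref{cr2}). Adding the $a$‑ and $b$‑contributions and multiplying by the factor $\tfrac{b-a}{4}$ coming from Lemma \ref{L1} (so that $\tfrac{b-a}{4}(\tfrac12)^{1/p}=\tfrac{b-a}{2^{(2p+1)/p}}$) leaves the coefficient of $\int_0^1(1-\tfrac{t}{q})h^{\alpha/q}(t)\,dt$ equal to $\Phi_a^{1/q}+\Phi_b^{1/q}$ and the free term equal to $2(\tfrac{m}{2})^{1/q}C$. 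The substitution $x\in\{a,b\}$, $y=\tfrac{a+b}{2}$ into the $(h\text{-}(\alpha,m))$‑convexity of $|f'|^{q}$ needed for the inner integrals is the one already carried out in (\ref{t2c}), so that part is routine.

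The steps I expect to be the main obstacles are (i) the integral estimate $J^{1/q}\le\int_0^1(1-\tfrac{t}{q})h^{\alpha/q}(t)\,dt$, which requires combining Bernoulli with a power‑mean inequality that pulls $1/q$ inside the integral, and (ii) the final bookkeeping that replaces $\Phi_a^{1/q}+\Phi_b^{1/q}=(|f'(a)|^{q}-mC^{q})^{1/q}+(|f'(b)|^{q}-mC^{q})^{1/q}$ by $|f'(a)|+|f'(b)|-2mC$ and $2(\tfrac{m}{2})^{1/q}C$ by $mC$; the latter is exactly the kind of collapse of the $m$‑dependent powers that is performed without detailed justification at the close of Corollary \ref{cor1}, and it is here that one must bring in $0\le m\le1$ together with the normalization of $h$ (and where any additional standing hypothesis, such as $h^{\alpha}\le1$ on $[0,1]$, would have to be used). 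Once those two estimates are granted, collecting terms gives (\ref{cr2}).
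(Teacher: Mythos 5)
The paper offers no proof of Corollary \ref{cor2} at all: it is stated bare, and the sentence preceding it (about twice-differentiable functions and Lemma \ref{L2}) really refers to Theorems \ref{T4}--\ref{T6}. The intended derivation is plainly the one you reconstruct --- read the corollary off Theorem \ref{T3} via the root-subadditivity fact quoted in Corollary \ref{cor1}, with Bernoulli's inequality $(1-t)^{1/q}\le 1-\frac{t}{q}$ accounting for the factor $\left(1-\frac{t}{q}\right)$ in (\ref{cr2}) --- so in approach you coincide with the paper, and you are right that Lemma \ref{L2} plays no role here.

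That said, the two obstacles you flag are genuine and cannot be closed as stated. The estimate $\left(\int_0^1(1-t)h^{\alpha}(t)\,dt\right)^{1/q}\le\int_0^1\left(1-\frac{t}{q}\right)h^{\alpha/q}(t)\,dt$ requires pulling the exponent $1/q$ inside the integral, and that goes the wrong way: by H\"{o}lder (equivalently Jensen for the concave map $x\mapsto x^{1/q}$) one has $\int_0^1 g^{1/q}(t)\,dt\le\left(\int_0^1 g(t)\,dt\right)^{1/q}$, the reverse of what you need, with equality only in degenerate cases. Similarly, your free term $2\left(\tfrac{m}{2}\right)^{1/q}C=2^{1-1/q}m^{1/q}C$ satisfies $2^{1-1/q}m^{1/q}\ge m$ whenever $0<m\le1<q$, so the collapse to $mC$ is also in the wrong direction, and the replacement of $\left(|f'(a)|^{q}-mC^{q}\right)^{1/q}$ by $|f'(a)|-mC$ is not valid in general either (nor is the possible negativity of $|f'(a)|^{q}-mC^{q}$ addressed). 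These are not defects you introduced: exactly the same unjustified replacements --- $(\int_0^1 h^{\alpha})^{1/q}$ by $\int_0^1 h^{\alpha/q}$ and $m^{1/q}$ by $m$ --- are needed to pass from Theorem \ref{T2} to Corollary \ref{cor1}, which the paper waves through with the subadditivity fact alone. So your proposal faithfully reproduces the paper's implicit argument and is commendably candid about where it creaks, but, as you concede, it does not (and without additional hypotheses, such as $q=1$ or restrictive assumptions on $h$, $m$, cannot) justify those two steps rigorously.
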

\begin{thm}\label{T4}
Let $f:I^o\subseteq \mathbb{R}\rightarrow \mathbb{R}$ be a differentiable function on $I^o$ (interior of I), $a,b \in I$ with $a<b$. If $f' \in L^1[a,b]$. If the mapping $|f''|$ is $(h-(\alpha,m))$-convex on $[a,b]$, then
\begin{eqnarray}\label{Te4}
&&\!\!\!\!\!\!\!\!\!\!\!\!\!\!\!\!\!\!\!\!\nonumber\left|\frac{f(a)+f(b)}{2}-\frac{1}{b-a}\int_a^b f(x)dx\right|\leq \frac{(b-a)^2}{2}\left[\left\{\left|f''(a)\right|-m\left|f''\left(\frac{b}{m}\right)\right|\right\}\right.\\
&&\left.\indent\indent\indent\indent\indent\indent\indent\indent\indent\indent\int_0^1 t(1-t)h^\alpha(t)dt + \frac{m}{6}\left|f''\left(\frac{b}{m}\right)\right|\right]
\end{eqnarray}
\end{thm}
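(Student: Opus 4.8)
The plan is to run the same machine as in Theorem \ref{T1}, but starting from the second-order identity of Lemma \ref{L2} rather than Lemma \ref{L1}. First I would take absolute values on both sides of (\ref{e4}) and pull the modulus inside the integral, which is legitimate because $t(1-t)\geq 0$ on $[0,1]$:
\begin{equation*}
\left|\frac{f(a)+f(b)}{2}-\frac{1}{b-a}\int_a^b f(x)dx\right|\leq \frac{(b-a)^2}{2}\int_0^1 t(1-t)\left|f''\left(ta+(1-t)b\right)\right|dt.
\end{equation*}

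Next I would invoke the hypothesis that $|f''|$ is $(h-(\alpha,m))$-convex. The key is to write the argument in the form required by the definition: $ta+(1-t)b = t\,a + m(1-t)\frac{b}{m}$, so taking $x=a$, $y=\frac{b}{m}$, $\lambda=t$ gives
\begin{equation*}
\left|f''\left(ta+(1-t)b\right)\right|\leq h^\alpha(t)\left|f''(a)\right| + m\left(1-h^\alpha(t)\right)\left|f''\left(\frac{b}{m}\right)\right|.
\end{equation*}
Substituting this estimate and using linearity of the integral, the right-hand side is bounded by
\begin{equation*}
\frac{(b-a)^2}{2}\left[\left|f''(a)\right|\int_0^1 t(1-t)h^\alpha(t)dt + m\left|f''\left(\frac{b}{m}\right)\right|\int_0^1 t(1-t)\left(1-h^\alpha(t)\right)dt\right].
\end{equation*}

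Finally I would evaluate the elementary moment $\int_0^1 t(1-t)dt = \beta(2,2) = \frac{1}{6}$ using the Euler Beta function recalled above, so that $\int_0^1 t(1-t)(1-h^\alpha(t))dt = \frac{1}{6} - \int_0^1 t(1-t)h^\alpha(t)dt$. Collecting the two contributions that multiply $\int_0^1 t(1-t)h^\alpha(t)dt$ produces the coefficient $\left|f''(a)\right| - m\left|f''\left(\frac{b}{m}\right)\right|$ and leaves the residual term $\frac{m}{6}\left|f''\left(\frac{b}{m}\right)\right|$, which is precisely (\ref{Te4}). I do not expect a genuine obstacle: the only delicate points are the correct substitution $y=\frac{b}{m}$ inside the convexity inequality (so the affine argument lands on $b$ and not $mb$) and the evaluation of $\int_0^1 t(1-t)dt$. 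I would also note in passing that the stated hypothesis "$f'\in L^1[a,b]$" should read "$f''\in L^1[a,b]$" for Lemma \ref{L2} to be applicable, but this typo does not affect the computation.
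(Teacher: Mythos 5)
Your proposal is correct and follows exactly the route the paper intends: the paper omits the proof, stating only that Theorem \ref{T4} "can be proved in a similar way based on Lemma \ref{L2}", and your argument (modulus of the identity (\ref{e4}), the $(h-(\alpha,m))$-convexity estimate with $ta+(1-t)b = ta + m(1-t)\frac{b}{m}$, and the moment $\int_0^1 t(1-t)\,dt=\frac{1}{6}$) is precisely that adaptation of the proof of Theorem \ref{T1}. Your remark that the hypothesis should read $f''\in L^1[a,b]$ is also a fair correction of a typo in the statement.
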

\begin{thm}\label{T5}
Let $f:I^o\subseteq \mathbb{R}\rightarrow \mathbb{R}$ be a differentiable function on $I^o$ (interior of I), $a,b \in I$ with $a<b$. If $f' \in L^1[a,b]$. If the mapping $|f''|^q$ is $(h-(\alpha,m))$-convex on $[a,b]$, then
\begin{eqnarray}\label{Te5}
&&\!\!\!\!\!\!\!\!\!\!\!\!\!\!\!\nonumber\left|\frac{f(a)+f(b)}{2}-\frac{1}{b-a}\int_a^b f(x)dx\right|\leq \frac{(b-a) ^2}{2}\beta^{\frac{1}{p}}(p+1,p+1)\left[\left\{\left|f''(a)\right|^q- m\times\right.\right.\\
&& \left.\left.\indent\indent\indent\indent\indent\indent\indent\indent\indent\left|f''\left(\frac{b}{m}\right)\right|^q\right\} \int_0^1 h^\alpha(t)dt+m\left|f''\left(\frac{b}{m}\right)\right|^q\right]^{\frac{1}{q}}
\end{eqnarray}
\end{thm}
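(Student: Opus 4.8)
The plan is to mimic the argument of Theorem~\ref{T4}, but to insert H\"older's inequality in place of the crude bound, exactly as Theorem~\ref{T2} does relative to Theorem~\ref{T1}. I would begin from Lemma~\ref{L2}, which gives the exact identity
\begin{equation*}
\frac{f(a)+f(b)}{2}-\frac{1}{b-a}\int_a^b f(x)\,dx = \frac{(b-a)^2}{2}\int_0^1 t(1-t)f''(ta+(1-t)b)\,dt,
\end{equation*}
and take absolute values on both sides to get
\begin{equation*}
\left|\frac{f(a)+f(b)}{2}-\frac{1}{b-a}\int_a^b f(x)\,dx\right| \leq \frac{(b-a)^2}{2}\int_0^1 t(1-t)\left|f''(ta+(1-t)b)\right|\,dt.
\end{equation*}

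Next I would apply H\"older's integral inequality with conjugate exponents $p,q$ (so $\tfrac1p+\tfrac1q=1$), pairing the factor $t(1-t)$ with the $L^p$-norm and $\left|f''(\cdot)\right|$ with the $L^q$-norm:
\begin{equation*}
\int_0^1 t(1-t)\left|f''(ta+(1-t)b)\right|\,dt \leq \left(\int_0^1 [t(1-t)]^p\,dt\right)^{\frac1p}\left(\int_0^1\left|f''(ta+(1-t)b)\right|^q\,dt\right)^{\frac1q}.
\end{equation*}
The first factor is precisely the Euler Beta integral recorded in the preliminaries, since $\int_0^1 t^p(1-t)^p\,dt = \beta(p+1,p+1)$; this is what produces the constant $\beta^{1/p}(p+1,p+1)$ in (\ref{Te5}).

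For the second factor I would rewrite the argument of $f''$ in canonical $(\alpha,m)$-form, $ta+(1-t)b = ta + m(1-t)\tfrac{b}{m}$, and apply the hypothesis that $\left|f''\right|^q$ is $(h-(\alpha,m))$-convex on $[a,b]$, with $x=a$, $y=\tfrac{b}{m}$, $\lambda=t$:
\begin{equation*}
\left|f''(ta+(1-t)b)\right|^q \leq h^\alpha(t)\left|f''(a)\right|^q + m\left(1-h^\alpha(t)\right)\left|f''\left(\frac{b}{m}\right)\right|^q.
\end{equation*}
Integrating over $[0,1]$ and using $\int_0^1\left(1-h^\alpha(t)\right)\,dt = 1-\int_0^1 h^\alpha(t)\,dt$, the right-hand side collapses to $\left\{\left|f''(a)\right|^q - m\left|f''(\tfrac{b}{m})\right|^q\right\}\int_0^1 h^\alpha(t)\,dt + m\left|f''(\tfrac{b}{m})\right|^q$. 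Substituting both factors into the H\"older bound and multiplying through by $\tfrac{(b-a)^2}{2}$ gives exactly (\ref{Te5}).

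I do not anticipate any genuine obstacle: the argument is a routine combination of Lemma~\ref{L2}, one use of H\"older's inequality, and the defining property of $(h-(\alpha,m))$-convexity. The only points needing a little care are the identification of $\int_0^1[t(1-t)]^p\,dt$ with $\beta(p+1,p+1)$ (immediate from the Beta-function formula recorded before Section~\ref{sec2}) and the implicit assumption that $\tfrac{b}{m}$ lies in the domain on which $\left|f''\right|^q$ is $(h-(\alpha,m))$-convex, which is consistent with the convention used throughout the paper (cf. Theorems~\ref{T1}--\ref{T4}). One should also note that, for Lemma~\ref{L2} to apply, the standing assumption ought to read $f''\in L^1[a,b]$ rather than $f'\in L^1[a,b]$.
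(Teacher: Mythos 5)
Your proof is correct and is essentially the argument the paper intends: the paper omits the details, stating only that Theorem~\ref{T5} "can be proved in a similar way based on Lemma~\ref{L2}", and your combination of Lemma~\ref{L2}, H\"older's inequality with $\int_0^1[t(1-t)]^p\,dt=\beta(p+1,p+1)$, and the $(h-(\alpha,m))$-convexity of $|f''|^q$ is exactly the analogue of the paper's proof of Theorem~\ref{T2}. Your remark that the hypothesis should read $f''\in L^1[a,b]$ is also a fair correction of a typo in the statement.
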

\begin{cor}\label{cor3}
Let $f:I^o\subseteq \mathbb{R}\rightarrow \mathbb{R}$ be a differentiable function on $I^o$ (interior of I), $a,b \in I$ with $a<b$. If $f' \in L^1[a,b]$. If the mapping $|f''|^q$ is $(h-(\alpha,m))$-convex on $[a,b]$, then
\begin{eqnarray}\label{cr3}
&&\!\!\!\!\!\!\!\!\!\!\!\!\!\!\!\nonumber\left|\frac{f(a)+f(b)}{2}-\frac{1}{b-a}\int_a^b f(x)dx\right|\leq \frac{(b-a) ^2}{2}\beta^{\frac{1}{p}}(p+1,p+1)\left[\left\{\left|f''(a)\right|- m\times\right.\right.\\
&& \left.\left.\indent\indent\indent\indent\indent\indent\indent\indent\indent\left|f''\left(\frac{b}{m}\right)\right|\right\} \int_0^1 h^{\frac{\alpha}{q}}(t)dt+m\left|f''\left(\frac{b}{m}\right)\right|\right]
\end{eqnarray}
\end{cor}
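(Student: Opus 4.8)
The plan is to run, almost verbatim, the argument that produced Theorem~\ref{T5} --- only now basing it on Lemma~\ref{L2} instead of Lemma~\ref{L1} --- and then to close the estimate with precisely the elementary device already used to pass from Theorem~\ref{T2} to Corollary~\ref{cor1}.

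Concretely, I would begin from the identity in Lemma~\ref{L2}, take absolute values, and bring the modulus inside the integral to obtain
$$\left|\frac{f(a)+f(b)}{2}-\frac{1}{b-a}\int_a^b f(x)\,dx\right|\le \frac{(b-a)^2}{2}\int_0^1 t(1-t)\left|f''\left(ta+(1-t)b\right)\right|dt .$$
Next I would apply H\"older's integral inequality with a conjugate pair $p,q$ (so that $\frac1p+\frac1q=1$), peeling off the weight $t(1-t)$; since $\int_0^1\left(t(1-t)\right)^p\,dt=\beta(p+1,p+1)$, this leaves the factor $\beta^{1/p}(p+1,p+1)$ multiplied by $\left(\int_0^1|f''(ta+(1-t)b)|^q\,dt\right)^{1/q}$. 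Writing $b=m\cdot\frac{b}{m}$ and invoking the $(h-(\alpha,m))$-convexity of $|f''|^q$,
$$\left|f''\left(ta+(1-t)b\right)\right|^q\le h^\alpha(t)\,|f''(a)|^q+m\left(1-h^\alpha(t)\right)\left|f''\left(\tfrac{b}{m}\right)\right|^q ,$$
and integrating in $t$, I recover exactly the bracket of Theorem~\ref{T5}, namely $\left(\{|f''(a)|^q-m|f''(b/m)|^q\}\int_0^1 h^\alpha(t)\,dt+m|f''(b/m)|^q\right)^{1/q}$.

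To descend from this $q$-th-root form to the ``unrolled'' form asserted in the corollary, I would, exactly as in the proof of Corollary~\ref{cor1}, invoke the elementary inequality $(\Phi+\Psi)^r\le\Phi^r+\Psi^r$ for $0<r<1$ and $\Phi,\Psi\ge0$, taken with $r=\frac1q$: splitting the bracket as $\Phi=|f''(a)|^q\int_0^1 h^\alpha(t)\,dt$ and $\Psi=m|f''(b/m)|^q\left(1-\int_0^1 h^\alpha(t)\,dt\right)$ and writing $h^\alpha(t)=\left(h^{\alpha/q}(t)\right)^q$, the exponent $\frac1q$ distributes onto the two pieces, the factor $|f''|^q$ becomes $|f''|$, and the term $\int_0^1 h^{\alpha/q}(t)\,dt$ is produced; reassembling with the constants $\frac{(b-a)^2}{2}$ and $\beta^{1/p}(p+1,p+1)$ yields $(\ref{cr3})$.

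The modulus / H\"older / convexity chain is routine and word-for-word parallel to Theorems~\ref{T2} and \ref{T5}, so I do not anticipate any difficulty there. The one step that genuinely deserves care --- and where I expect the actual work of a fully rigorous write-up to sit --- is this last reduction: strictly speaking $\left(\int_0^1 h^\alpha\right)^{1/q}$ and $\int_0^1 h^{\alpha/q}$ are not the same quantity, so the clean way to proceed is to carry out the subadditivity estimate on the \emph{integrand} $|f''(ta+(1-t)b)|=\left(|f''(ta+(1-t)b)|^q\right)^{1/q}$ \emph{before} integrating in $t$, so that $h^{\alpha/q}(t)$ appears pointwise under the integral sign and only afterwards is integrated; modulo that reorganisation the corollary follows.
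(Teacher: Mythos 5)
Your route is the one the paper itself intends: the paper gives no explicit proof of Corollary~\ref{cor3}, saying only that the twice--differentiable results follow from Lemma~\ref{L2} in the same way as Theorems~\ref{T1}--\ref{T3}, with the subadditivity fact quoted for Corollary~\ref{cor1} invoked ``at the end''; and your first two steps (modulus inside the integral, H\"older with $\int_0^1\bigl(t(1-t)\bigr)^p\,dt=\beta(p+1,p+1)$, then $(h-(\alpha,m))$-convexity of $|f''|^q$) reproduce Theorem~\ref{T5} exactly as the paper does. The genuine gap is the final ``unrolling'' step, which you have correctly flagged but not actually resolved. Applying $(\Phi+\Psi)^{1/q}\le\Phi^{1/q}+\Psi^{1/q}$ to the bracket of (\ref{Te5}) gives at best $|f''(a)|\bigl(\int_0^1 h^{\alpha}(t)\,dt\bigr)^{1/q}+m^{1/q}\,\bigl|f''\bigl(\tfrac{b}{m}\bigr)\bigr|\bigl(\int_0^1(1-h^{\alpha}(t))\,dt\bigr)^{1/q}$, and none of the replacements still needed to reach (\ref{cr3}) is an upper bound: Jensen on the probability space $[0,1]$ gives $\int_0^1 h^{\alpha/q}(t)\,dt\le\bigl(\int_0^1 h^{\alpha}(t)\,dt\bigr)^{1/q}$, one has $m^{1/q}\ge m$ for $m\in[0,1]$, and likewise $\bigl(\int_0^1(1-h^{\alpha})\bigr)^{1/q}\ge 1-\int_0^1 h^{\alpha/q}$; all three substitutions go the wrong way.

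Your proposed repair --- performing the subadditivity on the integrand before integrating --- does not close this gap, because the two halves of the argument are incompatible. After H\"older the inner integrand is $|f''(ta+(1-t)b)|^{q}$, so there is no pointwise modulus left to split; if instead you split $|f''(ta+(1-t)b)|\le h^{\alpha/q}(t)\,|f''(a)|+m^{1/q}\bigl(1-h^{\alpha}(t)\bigr)^{1/q}\bigl|f''\bigl(\tfrac{b}{m}\bigr)\bigr|$ \emph{before} H\"older, then the weight $t(1-t)$ stays attached to $h^{\alpha/q}$, and the factor $\beta^{1/p}(p+1,p+1)$ together with the unweighted integral $\int_0^1 h^{\alpha/q}(t)\,dt$ appearing in (\ref{cr3}) is never produced. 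So as written your argument proves Theorem~\ref{T5} and the ``Minkowski-type'' bound displayed above, but not the stated right-hand side of (\ref{cr3}). In fairness, the paper's own implicit justification (the one-line hint attached to Corollary~\ref{cor1}, applied mutatis mutandis) suffers from exactly the same defect, so the difficulty lies as much with the statement as with your write-up; but a correct proof of (\ref{cr3}) as printed is not obtained by your proposal.
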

\begin{thm}\label{T6}
Let $f:I^o\subseteq \mathbb{R}\rightarrow \mathbb{R}$ be a differentiable function on $I^o$ (interior of I), $a,b \in I$ with $a<b$. If $f' \in L^1[a,b]$. If the mapping $|f''|^q$ is $(h-(\alpha,m))$-convex on $[a,b]$, then
\begin{eqnarray}\label{Te6}
&&\!\!\!\!\!\!\!\!\!\!\!\!\!\nonumber\left|\frac{f(a)+f(b)}{2}-\frac{1}{b-a}\int_a^b f(x)dx\right|\leq \frac{(b-a) ^2}{2.6^{\frac{1}{p}}}\left[\left\{\left|f''(a)\right|^q- m\left|f''\left(\frac{b}{m}\right)\right|^q\right\}\right.\\
&& \left.\indent\indent\indent\indent\indent\indent\indent\indent\indent\indent\indent \int_0^1 t(1-t)h^\alpha(t)dt+\frac{m}{6}\left|f''\left(\frac{b}{m}\right)\right|^q\right]^{\frac{1}{q}}
\end{eqnarray}
\end{thm}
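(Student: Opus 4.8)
The plan is to follow the same pattern as the proofs built on Lemma \ref{L2} (cf. Theorems \ref{T3} and \ref{T5}), but keeping the weight $t(1-t)$ inside the H\"older step. Starting from the identity of Lemma \ref{L2} and taking absolute values,
\[
\left|\frac{f(a)+f(b)}{2}-\frac{1}{b-a}\int_a^b f(x)\,dx\right|\le \frac{(b-a)^2}{2}\int_0^1 t(1-t)\left|f''(ta+(1-t)b)\right|\,dt .
\]
This reduces the theorem to estimating the integral on the right.

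Next I would apply the weighted H\"older (power-mean) inequality with conjugate exponents $p,q>1$, using $t(1-t)$ as the weight: splitting $t(1-t)=\bigl(t(1-t)\bigr)^{1/p}\bigl(t(1-t)\bigr)^{1/q}$ gives
\[
\int_0^1 t(1-t)\left|f''(ta+(1-t)b)\right|\,dt\le\left(\int_0^1 t(1-t)\,dt\right)^{1/p}\left(\int_0^1 t(1-t)\left|f''(ta+(1-t)b)\right|^q\,dt\right)^{1/q}.
\]
Since $\int_0^1 t(1-t)\,dt=\tfrac16$, the first factor equals $6^{-1/p}$, and together with the prefactor $\tfrac{(b-a)^2}{2}$ this already produces the constant $\tfrac{(b-a)^2}{2\cdot 6^{1/p}}$ appearing in \eqref{Te6}.

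It remains to bound the $q$-th power integral. Taking $\lambda=t$, $x=a$, $y=b/m$ in the definition of $(h-(\alpha,m))$-convexity of $|f''|^q$ (note $ta+(1-t)b=\lambda x+m(1-\lambda)y$) yields
\[
\left|f''(ta+(1-t)b)\right|^q\le h^\alpha(t)\,|f''(a)|^q+m\bigl(1-h^\alpha(t)\bigr)\left|f''\!\left(\tfrac{b}{m}\right)\right|^q .
\]
Multiplying by $t(1-t)$, integrating over $[0,1]$, and using the exact identity $\int_0^1 t(1-t)\bigl(1-h^\alpha(t)\bigr)\,dt=\tfrac16-\int_0^1 t(1-t)h^\alpha(t)\,dt$, one collects the terms into $\bigl\{|f''(a)|^q-m|f''(b/m)|^q\bigr\}\int_0^1 t(1-t)h^\alpha(t)\,dt+\tfrac{m}{6}|f''(b/m)|^q$. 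Inserting this into the H\"older estimate and then into the absolute-value bound gives exactly \eqref{Te6}.

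The argument is essentially bookkeeping, and I do not anticipate a genuine obstacle: the only points requiring a little care are that $p$ and $q$ must be conjugate for the power-mean step to be legitimate, and that the rearrangement of the $(1-h^\alpha)$ term is an equality, so the bracketed expression in \eqref{Te6} is a valid upper bound even when $|f''(a)|^q-m|f''(b/m)|^q$ happens to be negative. One should also keep in mind that, although the statement only lists $f$ differentiable with $f'\in L^1[a,b]$, Lemma \ref{L2} implicitly requires $f$ twice differentiable with $f''\in L^1[a,b]$; this is the standing assumption under which the result is meant.
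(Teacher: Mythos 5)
Your proposal is correct and matches the paper's intended argument: the paper omits a detailed proof, stating only that Theorem \ref{T6} follows "in a similar way based on Lemma \ref{L2}," and your proof is exactly that analogue of Theorem \ref{T3} — power-mean/H\"older with the weight $t(1-t)$ giving the factor $6^{-1/p}$, followed by the $(h-(\alpha,m))$-convexity of $|f''|^q$ and the rearrangement yielding the bracketed expression in \eqref{Te6}. Your remark that the hypotheses should really read $f$ twice differentiable with $f''\in L^1[a,b]$ is also a fair correction of the paper's statement.
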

\begin{cor}\label{cor4}
Let $f:I^o\subseteq \mathbb{R}\rightarrow \mathbb{R}$ be a differentiable function on $I^o$ (interior of I), $a,b \in I$ with $a<b$. If $f' \in L^1[a,b]$. If the mapping $|f''|^q$ is $(h-(\alpha,m))$-convex on $[a,b]$, then
\begin{eqnarray}\label{cr4}
&&\!\!\!\!\!\!\!\!\!\!\!\!\!\nonumber\left|\frac{f(a)+f(b)}{2}-\frac{1}{b-a}\int_a^b f(x)dx\right|\leq \frac{(b-a) ^2}{2.6^{\frac{1}{p}}}\left[\left\{\left|f''(a)\right|- m\left|f''\left(\frac{b}{m}\right)\right|\right\}\right.\\
&& \left.\indent\indent\indent\indent\indent\indent\indent\indent\indent \int_0^1 t^\frac{1}{q}\left(1-\frac{t}{q}\right)h^{\frac{\alpha}{q}}(t)dt+\frac{m}{6}\left|f''\left(\frac{b}{m}\right)\right|\right]
\end{eqnarray}
\end{cor}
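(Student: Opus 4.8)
The plan is to run the argument of Theorem \ref{T6} essentially verbatim and then, in a final step, trade the outer exponent $1/q$ for the linear expression on the right of (\ref{cr4}), in exactly the way Corollary \ref{cor1} is deduced from Theorem \ref{T2} and Corollary \ref{cor3} from Theorem \ref{T5}.

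First I would start from the identity (\ref{e4}) of Lemma \ref{L2} and take absolute values, obtaining
$$\left|\frac{f(a)+f(b)}{2}-\frac{1}{b-a}\int_a^b f(x)\,dx\right|\le \frac{(b-a)^2}{2}\int_0^1 t(1-t)\,|f''(ta+(1-t)b)|\,dt.$$
Using that $|f''|^q$ is $(h-(\alpha,m))$-convex and writing $ta+(1-t)b=ta+m(1-t)(b/m)$ gives the pointwise bound
$$|f''(ta+(1-t)b)|^q\le h^\alpha(t)\bigl(|f''(a)|^q-m|f''(b/m)|^q\bigr)+m|f''(b/m)|^q,\qquad t\in[0,1].$$
I would then apply H\"{o}lder's inequality with exponents $p,q$ (so $\frac1p+\frac1q=1$) to the integral on the right above, splitting the weight as $t(1-t)=(t(1-t))^{1/p}(t(1-t))^{1/q}$; since $\int_0^1 t(1-t)\,dt=\frac16$ this produces the constant $6^{-1/p}$ and leaves the factor $\bigl(\int_0^1 t(1-t)|f''(ta+(1-t)b)|^q\,dt\bigr)^{1/q}$, into which I substitute the pointwise bound and integrate term by term. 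This reproduces exactly the inequality of Theorem \ref{T6}.

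The remaining, and main, step converts this $q$-th-root bound into (\ref{cr4}). Here I would use the sub-additivity inequality $(A+B)^{1/q}\le A^{1/q}+B^{1/q}$, valid for $A,B\ge0$ and $0<\frac1q<1$ — the device already announced in the proof of Corollary \ref{cor1} — to separate the term carrying the integral from the constant term $\bigl(\frac{m}{6}|f''(b/m)|^q\bigr)^{1/q}$. On the integral term I would move the exponent $1/q$ inside, producing the integrand $t^{1/q}(1-t)^{1/q}h^{\alpha/q}(t)$, and then apply Bernoulli's inequality $(1-t)^{1/q}\le 1-\frac tq$ on $[0,1]$ to replace $(1-t)^{1/q}$ by $1-\frac tq$; this is exactly the weight $t^{1/q}\bigl(1-\frac tq\bigr)h^{\alpha/q}(t)$ occurring in (\ref{cr4}). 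Finally, invoking $(\alpha,m)\in[0,1]^2$ to pass from $\{|f''(a)|^q-m|f''(b/m)|^q\}^{1/q}$ to $|f''(a)|-m|f''(b/m)|$ and from the residual $m^{1/q}$-factors back to $m$, and collecting everything under the prefactor $\frac{(b-a)^2}{2\cdot 6^{1/p}}$, produces the right-hand side of (\ref{cr4}).

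The genuine difficulty is confined to this last paragraph: each of the elementary inequalities (sub-additivity of $x\mapsto x^{1/q}$, Bernoulli, and the $[0,1]$-estimates on the $m$- and coefficient-powers) has to be invoked in the direction that enlarges the right-hand side, and some care is needed because the coefficient $|f''(a)|^q-m|f''(b/m)|^q$ need not be non-negative, so the splitting and the subsequent substitutions lean on the non-negativity built into the $(h-(\alpha,m))$-convexity class. Steps one and two — the identity of Lemma \ref{L2}, the H\"{o}lder estimate, and the term-by-term integration against $h^\alpha$ — are entirely parallel to the proof of Theorem \ref{T3} with Lemma \ref{L1} replaced by Lemma \ref{L2}, and present no obstacle.
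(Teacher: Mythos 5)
Your outline up through the reproduction of Theorem \ref{T6} is sound and is exactly the route the paper intends: the paper gives no explicit proof of this corollary, saying only that the twice-differentiable variants follow from Lemma \ref{L2} together with the subadditivity fact quoted in the proof of Corollary \ref{cor1}, and your first two steps reconstruct that faithfully.

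The genuine gap is in your final conversion step, and it is not merely a matter of ``care'': several of the elementary inequalities you invoke there go in the wrong direction for an upper bound. Moving the exponent $1/q$ inside the integral is Jensen's inequality for the concave map $x\mapsto x^{1/q}$ with respect to Lebesgue measure on $[0,1]$ (a probability measure), which gives $\left(\int_0^1 t(1-t)h^{\alpha}(t)\,dt\right)^{1/q}\geq \int_0^1 t^{1/q}(1-t)^{1/q}h^{\alpha/q}(t)\,dt$; the substitution therefore shrinks the bound and cannot be used to enlarge it. Likewise $(A^q-B^q)^{1/q}\geq A-B$ for $A\geq B\geq 0$ and $q\geq1$, so replacing $\{|f''(a)|^q-m|f''(b/m)|^q\}^{1/q}$ by $|f''(a)|-m|f''(b/m)|$ shrinks the bound, and $(m/6)^{1/q}\geq m/6$ for $m\in[0,1]$, so passing from the residual $m^{1/q}$-powers back to $m$ shrinks it as well. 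Only the subadditivity $(A+B)^{1/q}\leq A^{1/q}+B^{1/q}$ (which in any case requires the possibly negative coefficient $|f''(a)|^q-m|f''(b/m)|^q$ to be nonnegative, a point you notice but do not resolve) and Bernoulli's estimate $(1-t)^{1/q}\leq 1-t/q$ go the right way. Hence the chain as you describe it does not yield (\ref{cr4}). To be fair, the paper's own implicit passage from Theorem \ref{T6} to this corollary (and from Theorem \ref{T2} to Corollary \ref{cor1}) rests on the same directional defects, so you have reconstructed the intended argument; but as written it does not prove the stated inequality, which would need either extra hypotheses or a corrected right-hand side.
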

\section{Application to some special means}\label{sec3}
Let us recall the following means for any two positive numbers $a$ and $b$.
\begin{enumerate}
  \item  \textit{The Arithmetic mean}
  $$A\equiv A(a,b)=\frac{a+b}{2}$$
  \item \textit{The Harmonic mean}
  $$H\equiv H(a,b)=\frac{2ab}{a+b}  $$
  \item \textit{The $p-$Logarithmic mean}\\
  $$L_p\equiv L_{p}(a,b)=\left\{
                           \begin{array}{ll}
                             a, & \hbox{if $a=b$;}   \\
                             \left[\frac{b^{p+1}-a^{p+1}}{(p+1)(b-a)}\right]^\frac{1}{p}, & \hbox{if $a\neq b$.}
                           \end{array}
                         \right.$$
  \item The $Identric\ \ mean$\\
  $$I\equiv I(a,b)=\left\{
                           \begin{array}{ll}
                             a, & \hbox{if $a=b$;}  \\
                             \frac{1}{e}\left(\frac{b^b}{a^a}\right)^\frac{1}{b-a}, & \hbox{if $a\neq b$.}
                           \end{array}
                         \right.$$
 \item \textit{The Logarithmic mean}\\
  $$L\equiv L(a,b)=\left\{
                           \begin{array}{ll}
                             a, & \hbox{if $a=b$;}   \\
                             \frac{b-a}{\ln b\ -\ \ln a}, & \hbox{if $a\neq b$.}
                           \end{array}
                         \right.$$
\end{enumerate}
The following inequality is well known in the literature in ~\cite{r9}:
$$H\leq G \leq L\leq I\leq A.$$
It is also known that $L_p$ is  monotonically increasing over $p\in\mathbb{R},$ denoting $L_0=I$ and $L_{-1}=L.$
\begin{prop}\label{P1}
Let $p>1,$ $0<a<b$ and $q=\frac{p}{p-1}.$ Then one has the inequality.
\begin{equation}\label{S1}
\left|\mathrm{G}(a,b)\ - \ \mathrm{L}(a,b)\right| \leq \frac{\ln b-\ln a}{4(p+1)^{\frac{1}{p}}} \left[\mathrm{A} \left(|a|, |b|\right) \ + \ \mathrm{G}\left(|a|, |b|\right)\right].
\end{equation}
\end{prop}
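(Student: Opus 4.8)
The plan is to deduce \eqref{S1} directly from Corollary \ref{cor1} by a single well‑chosen substitution. I would apply Corollary \ref{cor1} to the function $f(x)=e^{x}$ on the interval $[\ln a,\ln b]$; this is admissible because $0<a<b$ forces $\ln a<\ln b$, and $f$ is smooth with $f'=e^{x}\in L^{1}[\ln a,\ln b]$. I would then take $h(t)=t$, $\alpha=1$, $m=1$: with these parameters the defining inequality of $(h-(\alpha,m))$‑convexity is precisely ordinary convexity, and since $q>1$ the map $x\mapsto|f'(x)|^{q}=e^{qx}$ is convex, so the hypothesis ``$|f'|^{q}$ is $(h-(\alpha,m))$‑convex on $[\ln a,\ln b]$'' is satisfied.

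Next I would translate the left–hand side. Writing the endpoints in Corollary \ref{cor1} as $\ln a$ and $\ln b$, one has
\[
f\!\left(\frac{\ln a+\ln b}{2}\right)=e^{(\ln a+\ln b)/2}=\sqrt{ab}=\mathrm{G}(a,b),\qquad \frac{1}{\ln b-\ln a}\int_{\ln a}^{\ln b}e^{x}\,dx=\frac{e^{\ln b}-e^{\ln a}}{\ln b-\ln a}=\frac{b-a}{\ln b-\ln a}=\mathrm{L}(a,b),
\]
so the left–hand side of \eqref{cr1} is exactly $\bigl|\mathrm{G}(a,b)-\mathrm{L}(a,b)\bigr|$, while the prefactor $b-a$ there is replaced by the length $\ln b-\ln a$ of the transformed interval.

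For the right–hand side, note that with $m=1$ the ``midpoint argument'' $\tfrac{a+b}{2m}$ of Corollary \ref{cor1} becomes $\tfrac{\ln a+\ln b}{2}$, so $\bigl|f'\bigl(\tfrac{\ln a+\ln b}{2}\bigr)\bigr|=\sqrt{ab}=\mathrm{G}(|a|,|b|)$, while $|f'(\ln a)|=a=|a|$ and $|f'(\ln b)|=b=|b|$ (the absolute values are harmless since $a,b>0$, whence $\mathrm{A}(|a|,|b|)=\mathrm{A}(a,b)$ and $\mathrm{G}(|a|,|b|)=\mathrm{G}(a,b)$). Inserting these values together with $\int_{0}^{1}h^{\alpha/q}(t)\,dt$ into \eqref{cr1} and simplifying, the bracket
\[
\bigl\{|a|+|b|-2\mathrm{G}(|a|,|b|)\bigr\}\int_{0}^{1}h^{\alpha/q}(t)\,dt+2\mathrm{G}(|a|,|b|)
\]
reduces to $\tfrac{|a|+|b|}{2}+\sqrt{|a||b|}=\mathrm{A}(|a|,|b|)+\mathrm{G}(|a|,|b|)$, which is exactly the right–hand side of \eqref{S1}.

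The one genuinely creative step is the substitution $f(x)=e^{x}$ on $[\ln a,\ln b]$, which turns the logarithmic‑versus‑geometric gap into the midpoint defect controlled by Corollary \ref{cor1}; everything after that is bookkeeping, the only delicate point being the evaluation of $\int_{0}^{1}h^{\alpha/q}(t)\,dt$ and checking that, once it is inserted, the terms carrying $\mathrm{G}(|a|,|b|)$ recombine so that the bracket becomes precisely $\mathrm{A}(|a|,|b|)+\mathrm{G}(|a|,|b|)$. An analogous treatment of $f(x)=e^{x}$ on $[\ln a,\ln b]$ through Theorems \ref{T2} and \ref{T3} (and of suitable power functions through Theorems \ref{T4}--\ref{T6}) would yield the remaining mean inequalities of this section.
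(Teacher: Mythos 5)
Your overall route is the same as the paper's (apply Corollary \ref{cor1} to $f(x)=e^{x}$ on $[\ln a,\ln b]$ with $h(t)=t$, $\alpha=m=1$), and your translation of the left-hand side and of the prefactor is correct. The gap is in the final ``bookkeeping'' step, which is exactly the delicate point you flagged but did not verify. With your choice $q=\frac{p}{p-1}>1$ one has
\[
\int_{0}^{1}h^{\alpha/q}(t)\,dt=\int_{0}^{1}t^{1/q}\,dt=\frac{q}{q+1}=\frac{p}{2p-1}>\frac12 ,
\]
so the bracket in \eqref{cr1} becomes
\[
\bigl(|a|+|b|-2\sqrt{|a||b|}\bigr)\frac{p}{2p-1}+2\sqrt{|a||b|}
=\mathrm{A}(|a|,|b|)+\mathrm{G}(|a|,|b|)+\frac{(\sqrt{b}-\sqrt{a})^{2}}{2(2p-1)},
\]
which is strictly larger than $\mathrm{A}+\mathrm{G}$ whenever $a\neq b$. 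The inequality you obtain from Corollary \ref{cor1} therefore has a strictly larger right-hand side than \eqref{S1}, and an upper bound by a larger quantity does not imply the bound \eqref{S1}; the claimed ``reduction'' of the bracket to $\mathrm{A}+\mathrm{G}$ simply fails for $q>1$.

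The reduction works only if $\int_{0}^{1}h^{\alpha/q}(t)\,dt=\tfrac12$, i.e.\ $q=1$, and this is precisely what the paper's own proof does: it sets $h(t)=t$, $\alpha=m=1$ \emph{and} $q=1$ (so the hypothesis needed is just convexity of $|f'|=e^{x}$, and the bracket collapses to $\tfrac{|f'(\ln a)|+|f'(\ln b)|}{2}+|f'(\tfrac{\ln a+\ln b}{2})|=\mathrm{A}+\mathrm{G}$). Admittedly this creates a tension with the proposition's stipulation $q=\frac{p}{p-1}$, since $q=1$ is not conjugate to a finite $p$ — a sloppiness inherited from the paper — but as written your argument, which insists on $q=\frac{p}{p-1}>1$, does not deliver \eqref{S1}; to close it you must either take $q=1$ in \eqref{cr1} as the paper does, or be content with the weaker bound containing the extra term $\frac{(\sqrt{b}-\sqrt{a})^{2}}{2(2p-1)}$.
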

\begin{proof}
By Corollary \ref{cor1} applied for the mapping $f(x)=e^x$ setting $h(t)=t$, $\alpha=1$, $m=1$ and $q=1$ we have the above inequality (\ref{S1}).
\end{proof}
\begin{prop}\label{P2}
Let $p>1,$ $0<a<b$ and $q=\frac{p}{p-1},$ then
\begin{equation*}
\left|\frac{\mathrm{A}(a, b)}{\mathrm{I}(a, b)}\right| \leq \exp \left\{\frac{b-a}{3.2^{\frac{2p+1}{p}}} \left(\mathrm{H}^{-1}\left(|a|,|b|\right)+2\mathrm{A}^{-1}\left(|a|,|b|\right)\right)\right\}
\end{equation*}
\end{prop}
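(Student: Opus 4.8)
The plan is to specialize Corollary~\ref{cor2} to the mapping $f(x)=\ln x$ on $[a,b]$, with the parameter choices $h(t)=t$, $\alpha=1$, $m=1$ and $q=1$, exactly parallel to the derivation of Proposition~\ref{P1}. With these choices the hypothesis that $|f'|^{q}$ be $(h-(\alpha,m))$-convex reduces to the requirement that $|f'|=1/x$ be nonnegative and convex on $[a,b]$, which holds since $0<a<b$; moreover $f$ is differentiable with $f'\in L^{1}[a,b]$, so all hypotheses of Corollary~\ref{cor2} are satisfied.

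First I would identify the left-hand side. We have $f\!\left(\frac{a+b}{2}\right)=\ln A(a,b)$, while integration by parts gives
\[
\frac{1}{b-a}\int_{a}^{b}\ln x\,dx=\frac{b\ln b-a\ln a}{b-a}-1=\ln I(a,b),
\]
by the definition of the identric mean. Hence the left-hand side of Corollary~\ref{cor2} equals $\left|\ln\frac{A(a,b)}{I(a,b)}\right|$, and since $I(a,b)\le A(a,b)$ this is simply $\ln\frac{A(a,b)}{I(a,b)}$.

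Next I would evaluate the right-hand side. Here $|f'(a)|=\frac1a$, $|f'(b)|=\frac1b$, $\left|f'\!\left(\frac{a+b}{2}\right)\right|=\frac{2}{a+b}$, and $\int_{0}^{1}(1-t)\,t\,dt=\frac16$. Substituting into the bound of Corollary~\ref{cor2} and rearranging, the bracketed term becomes $\frac16\!\left(\frac1a+\frac1b-\frac{4}{a+b}\right)+\frac{2}{a+b}=\frac13\!\left(\frac{1}{2a}+\frac{1}{2b}+\frac{4}{a+b}\right)$. Since $a,b>0$ we have $\frac{1}{2a}+\frac{1}{2b}=H^{-1}(|a|,|b|)$ and $\frac{2}{a+b}=A^{-1}(|a|,|b|)$, so the right-hand side of Corollary~\ref{cor2} is exactly $\frac{b-a}{3\cdot 2^{\frac{2p+1}{p}}}\bigl(H^{-1}(|a|,|b|)+2A^{-1}(|a|,|b|)\bigr)$.

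Combining the last two steps gives $\ln\frac{A(a,b)}{I(a,b)}\le \frac{b-a}{3\cdot 2^{\frac{2p+1}{p}}}\bigl(H^{-1}(|a|,|b|)+2A^{-1}(|a|,|b|)\bigr)$, and exponentiating both sides yields the assertion, since $\left|\frac{A(a,b)}{I(a,b)}\right|=\frac{A(a,b)}{I(a,b)}>0$. I expect no genuine obstacle: the only delicate points are the antiderivative $\int\ln x\,dx=x\ln x-x$ needed to recognize $\ln I(a,b)$, the algebraic collapse of $\frac16\!\left(\frac1a+\frac1b-\frac{4}{a+b}\right)+\frac{2}{a+b}$ into harmonic and arithmetic means, and noting that the exponential bound is legitimate precisely because both sides of the logarithmic inequality have the sign forced by $I\le A$.
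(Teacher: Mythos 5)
Your proposal is correct and follows essentially the same route as the paper: the paper also deduces the result from Corollary~\ref{cor2} with $h(t)=t$, $\alpha=m=q=1$, the only cosmetic difference being that the paper takes $f(x)=-\ln x$ rather than $\ln x$, which is immaterial since only $|f'|=1/x$ and the absolute difference $\left|\ln A(a,b)-\ln I(a,b)\right|$ enter. Your evaluation of the bracket as $\tfrac13\bigl(H^{-1}(a,b)+2A^{-1}(a,b)\bigr)$ and the final exponentiation (justified by $I\le A$) match the intended computation.
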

\begin{proof}
Follows from Corollary \ref{cor2} for the mapping $f(x)=- \ln(x)$ setting $h(t)=t$, $\alpha=1$, $m=1$ and $q=1$.
\end{proof}\\
Another result which is connected with $p-$Logarithmic mean $L_{p}(a,b)$ is the following one:
\begin{prop}\label{P3}
Let $p>1,$ $0<a<b$ and $q=\frac{p}{p-1},$ then
\begin{eqnarray*}
&&\!\!\!\!\!\!\!\! \left|\mathrm{H}^{-1}(a,b) \ - \ \mathrm{L}^{-1}(a, b)\right|  \leq (b-a)^2\beta^{\frac{1}{p}}(p+1, p+1) \mathrm{H}^{-1}\left(|a|^3, |b|^3\right)
\end{eqnarray*}
\end{prop}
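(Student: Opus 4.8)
The plan is to derive this estimate by applying Corollary \ref{cor3} to the mapping $f(x)=\frac{1}{x}$ on $[a,b]$, with the parameter choices $h(t)=t$, $\alpha=1$, $m=1$ and $q=1$, in exactly the spirit of Propositions \ref{P1} and \ref{P2}.

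First I would collect the data needed to invoke Corollary \ref{cor3} for this $f$. Since $f''(x)=\frac{2}{x^3}$ is convex on $[a,b]\subset(0,\infty)$ (its second derivative $\frac{24}{x^5}$ is positive) and non-negative there, the hypothesis that $|f''|^q$ be $(h-(\alpha,m))$-convex is satisfied with $h(t)=t$, $\alpha=m=1$ and $q=1$, since this is then ordinary convexity. Moreover $\frac{f(a)+f(b)}{2}=\frac{1}{2}\left(\frac{1}{a}+\frac{1}{b}\right)=\mathrm{H}^{-1}(a,b)$ and $\frac{1}{b-a}\int_a^b\frac{dx}{x}=\frac{\ln b-\ln a}{b-a}=\mathrm{L}^{-1}(a,b)$, so the left-hand side of (\ref{cr3}) is exactly $\left|\mathrm{H}^{-1}(a,b)-\mathrm{L}^{-1}(a,b)\right|$.

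Next I would simplify the right-hand side of (\ref{cr3}). With $h(t)=t$, $\alpha=1$ and $q=1$ one gets $\int_0^1 h^{\alpha/q}(t)\,dt=\int_0^1 t\,dt=\frac{1}{2}$, while $|f''(a)|=\frac{2}{a^3}$ and $\left|f''\!\left(\frac{b}{m}\right)\right|=|f''(b)|=\frac{2}{b^3}$. Substituting $m=1$, the bracket in (\ref{cr3}) collapses to $\left(\frac{2}{a^3}-\frac{2}{b^3}\right)\frac{1}{2}+\frac{2}{b^3}=\frac{1}{a^3}+\frac{1}{b^3}$, so the bound becomes $\frac{(b-a)^2}{2}\,\beta^{1/p}(p+1,p+1)\left(\frac{1}{a^3}+\frac{1}{b^3}\right)$. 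Finally, since $a,b>0$ one has $\frac{1}{a^3}+\frac{1}{b^3}=2\,\mathrm{H}^{-1}(|a|^3,|b|^3)$, which converts the right-hand side into $(b-a)^2\,\beta^{1/p}(p+1,p+1)\,\mathrm{H}^{-1}(|a|^3,|b|^3)$, exactly the asserted inequality.

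No genuine difficulty arises here; the only items demanding attention are verifying that $x\mapsto 2/x^3$ is honestly convex on $[a,b]$ so that Corollary \ref{cor3} may be applied, and noting (as in the proofs of Propositions \ref{P1} and \ref{P2}) that the choice $q=1$ is used only formally while $p>1$ is retained as the free parameter in the constant $\beta^{1/p}(p+1,p+1)$.
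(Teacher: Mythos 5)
Your proposal is correct and follows exactly the paper's own route: the paper proves Proposition \ref{P3} by the one-line application of Corollary \ref{cor3} to $f(x)=\tfrac{1}{x}$ with $h(t)=t$, $\alpha=1$, $m=1$, $q=1$, and your computation (bracket collapsing to $\tfrac{1}{a^3}+\tfrac{1}{b^3}=2\,\mathrm{H}^{-1}(a^3,b^3)$, left side equal to $|\mathrm{H}^{-1}(a,b)-\mathrm{L}^{-1}(a,b)|$) simply makes explicit what the paper leaves to the reader.
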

\begin{proof} Follows by Corollary  \ref{cor3}, for the mapping $f(x)=\frac{1}{x}$ setting $h(t)=t$, $\alpha=1$, $m=1$ and $q=1$.
\end{proof}
\begin{prop}\label{P4}
Let $p>1,$ $0<a<b$ and $q=\frac{p}{p-1},$ then
\begin{eqnarray*}
&&\!\!\!\!\!\!\!\! \left|\mathrm{A}(a^n,b^n) - \mathrm{L}_p^p(a, b)\right|  \leq |n(n-1)|\frac{(b-a)^2}{2.6^{\frac{p+1}{p}}} \mathrm{A}\left(|a|^{p-2}, |b|^{p-2}\right)
\end{eqnarray*}
\end{prop}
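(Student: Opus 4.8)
The plan is to apply Corollary~\ref{cor4} to the map $f(x)=x^{n}$ with the specializations $h(t)=t$, $\alpha=1$, $m=1$, while keeping the genuine conjugate exponent $q=\frac{p}{p-1}$ rather than the degenerate choice $q=1$ (which would only return the limiting constant $6^{1}$). First I would read off the two sides of~\eqref{cr4}. Because
\[
\frac{f(a)+f(b)}{2}=\mathrm{A}(a^{n},b^{n}),\qquad \frac{1}{b-a}\int_a^b x^{n}\,dx=\frac{b^{n+1}-a^{n+1}}{(n+1)(b-a)}=\mathrm{L}_{n}^{n}(a,b),
\]
the left-hand side of~\eqref{cr4} is exactly $\bigl|\mathrm{A}(a^{n},b^{n})-\mathrm{L}_{n}^{n}(a,b)\bigr|$, the quantity in the statement (so $\mathrm{L}_p^p$ and the exponent $p-2$ there are to be read as $\mathrm{L}_n^n$ and $n-2$, consistent with the function $x^n$). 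On the right, $f''(x)=n(n-1)x^{n-2}$ gives $|f''(a)|=|n(n-1)|\,a^{n-2}$ and, since $m=1$, $\bigl|f''(b/m)\bigr|=|n(n-1)|\,b^{n-2}$.

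Next I would evaluate the elementary integral sitting inside the bracket of~\eqref{cr4}. With $h(t)=t$ and $\alpha=1$ its integrand is $t^{1/q}\bigl(1-\tfrac{t}{q}\bigr)\,t^{1/q}=t^{2/q}\bigl(1-\tfrac{t}{q}\bigr)$, whence
\[
C(q):=\int_0^1 t^{2/q}\Bigl(1-\tfrac{t}{q}\Bigr)\,dt=\frac{q}{q+2}-\frac{1}{2(q+1)} .
\]
A short computation gives $C(1)=\tfrac{1}{12}$, and differentiating shows $C'(q)>0$, so $C(q)\ge\tfrac{1}{12}$ for all $q\ge1$, with equality only at $q=1$. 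The bracket of~\eqref{cr4} therefore equals $\bigl(|f''(a)|-|f''(b)|\bigr)C(q)+\tfrac{1}{6}|f''(b)|$.

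The decisive step is to dominate this bracket by $\tfrac{1}{12}\bigl(|f''(a)|+|f''(b)|\bigr)=\tfrac{|n(n-1)|}{6}\,\mathrm{A}(|a|^{n-2},|b|^{n-2})$. Writing $X=|f''(a)|$ and $Y=|f''(b)|$, the inequality $(X-Y)C(q)+\tfrac{1}{6}Y\le\tfrac{1}{12}(X+Y)$ collapses after rearrangement to $(X-Y)\bigl(C(q)-\tfrac{1}{12}\bigr)\le0$; since $C(q)\ge\tfrac{1}{12}$, this is secured precisely by the ordering $X\le Y$, i.e. $a^{n-2}\le b^{n-2}$, which follows from $0<a<b$ once $n\ge2$. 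Feeding this bound back into~\eqref{cr4} and using $6\cdot 6^{1/p}=6^{(p+1)/p}$ to absorb the prefactor $\frac{(b-a)^2}{2\cdot6^{1/p}}$ produces exactly
\[
\bigl|\mathrm{A}(a^{n},b^{n})-\mathrm{L}_{n}^{n}(a,b)\bigr|\le|n(n-1)|\frac{(b-a)^2}{2\cdot6^{(p+1)/p}}\,\mathrm{A}(|a|^{n-2},|b|^{n-2}).
\]
The main obstacle is this last reconciliation: the finite value $C(q)>\tfrac{1}{12}$ (for $q>1$) cannot be cancelled by setting $q=1$, and instead must be absorbed against the sign of $a^{n-2}-b^{n-2}$ via the monotonicity estimate $(X-Y)\bigl(C(q)-\tfrac{1}{12}\bigr)\le0$. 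Verifying the ordering $a^{n-2}\le b^{n-2}$ (equivalently restricting to $n\ge2$, where it is automatic and where $|f''|^q$ is genuinely convex) is the single point that requires care; for $n<2$ the ordering reverses and a separate argument would be needed.
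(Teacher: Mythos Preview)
Your argument is sound and in fact more careful than the paper's own one-line proof, which simply says ``follows by Corollary~\ref{cor4}, for the mapping $f(x)=(1-x)^n$ setting $h(t)=t$, $\alpha=1$, $m=1$ and $q=1$.'' The two approaches differ in two respects. First, the paper writes $f(x)=(1-x)^n$, which cannot literally produce $\mathrm{A}(a^n,b^n)$ or $\mathrm{L}_n^n(a,b)$ on the left-hand side; your choice $f(x)=x^n$ is the correct specialization, and you also rightly note that the indices $p$ in $\mathrm{L}_p^p$ and $|a|^{p-2},|b|^{p-2}$ must be read as $n$. Second, and more substantively, the paper forces the constant $\tfrac{1}{12}$ by setting $q=1$, which collapses the integral in~\eqref{cr4} to $\int_0^1 t^2(1-t)\,dt=\tfrac{1}{12}$ but is formally inconsistent with the hypothesis $p>1$, $q=\frac{p}{p-1}>1$. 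You instead keep $q>1$, compute $C(q)=\frac{q}{q+2}-\frac{1}{2(q+1)}\ge\tfrac{1}{12}$, and absorb the excess $C(q)-\tfrac{1}{12}$ against the sign of $|f''(a)|-|f''(b)|$; this genuinely justifies the stated bound for $n\ge 2$ (where $a^{n-2}\le b^{n-2}$), at the cost of an explicit restriction on $n$ that the paper leaves implicit. Your observation that $|f''|^q$ must actually be convex is also a hypothesis the paper passes over; strictly this needs $q(n-2)\notin(0,1)$, i.e.\ $n\le 2$ or $n\ge 2+\tfrac{1}{q}$, so a fully rigorous version would record this as well.
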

\begin{proof} Follows by Corollary  \ref{cor4}, for the mapping $f(x)=(1-x)^n$ setting $h(t)=t$, $\alpha=1$, $m=1$ and $q=1$.
\end{proof}
\section{Error Estimates for Midpoint Formula and Trapezoidal Formula}\label{sec4}
Let $K$ be the  $a= x_0 < x_1 < ... < x_{n-1} < x_n = b$ of the interval $[a, b]$ and consider the quadrature formula
\begin{equation}\label{m1}
\int_a^b f(x)dx = S(f, K) + R(f, K)
\end{equation}
where
\begin{equation*}
S(f, K)=\sum_{i=0}^{n-1} f\left(\frac{x_i + x_{i+1}}{2}\right)\left(x_{i+1}-x_i\right)
\end{equation*}
for the midpoint version and $R(f,K)$ denotes the related approximation error.
\begin{equation*}
S(f, K)=\sum_{i=0}^{n-1} \frac{f(x_i)+f(x_{i+1})}{2}\left(x_{i+1}-x_i\right)
\end{equation*}
for the trapezoidal version and $R(f,K)$ denotes the related approximation error.
\begin{prop}\label{P5}
Let $f: I \subseteq \mathbb{R} \rightarrow \mathbb{R}$ be a differentiable mapping on $I^o$ such  that $f' \in L^1[a, b]$, where $a, b \in I$ with $a < b$ and $|f'|$ is convex on $[a, b]$, then
\begin{eqnarray}\label{pr1}
&&\!\!\!\!\!\!\!\!\!\!\!\!\!\!\!\!\left|R(f, K)\right| \leq \frac{1}{2^{\frac{2p+1}{p}}}\sum_{i=0}^{n-1} \frac{\left(x_{i+1}-x_i\right)^2}{2}\left(\left|f'(x_i)\right|+\left|f'(x_{i+1})\right|\right) .
\end{eqnarray}
\end{prop}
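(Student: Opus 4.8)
The plan is to localize the midpoint estimate furnished by Corollary \ref{cor2}, in its ``ordinary convexity'' form, to each cell of the partition $K$ and then add up. First I would decompose the remainder over $K$. Since
\[
S(f,K)=\sum_{i=0}^{n-1}(x_{i+1}-x_i)\,f\!\left(\frac{x_i+x_{i+1}}{2}\right),\qquad \int_a^b f(x)\,dx=\sum_{i=0}^{n-1}\int_{x_i}^{x_{i+1}}f(x)\,dx,
\]
we have $R(f,K)=\sum_{i=0}^{n-1}\left(\int_{x_i}^{x_{i+1}}f(x)\,dx-(x_{i+1}-x_i)\,f\!\left(\frac{x_i+x_{i+1}}{2}\right)\right)$, and the triangle inequality gives
\[
|R(f,K)|\le\sum_{i=0}^{n-1}(x_{i+1}-x_i)\left|f\!\left(\frac{x_i+x_{i+1}}{2}\right)-\frac{1}{x_{i+1}-x_i}\int_{x_i}^{x_{i+1}}f(x)\,dx\right|.
\]

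Next I would bound each factor on the right by applying Corollary \ref{cor2} on the subinterval $[x_i,x_{i+1}]$ with the specialization $h(t)=t$, $\alpha=1$, $m=1$. Under this choice the hypothesis ``$|f'|^q$ is $(h-(\alpha,m))$-convex'' of Corollary \ref{cor2} reduces to ``$|f'|^q$ is convex'', which is automatic here: $|f'|$ is nonnegative and convex and $s\mapsto s^q$ is nondecreasing and convex on $[0,\infty)$ for $q=\frac{p}{p-1}\ge1$, so the composition $|f'|^q$ is convex. Writing $m_i=\frac{x_i+x_{i+1}}{2}$, Corollary \ref{cor2} then yields
\[
\left|f(m_i)-\frac{1}{x_{i+1}-x_i}\int_{x_i}^{x_{i+1}}f\right|\le\frac{x_{i+1}-x_i}{2^{\frac{2p+1}{p}}}\left[\Big(|f'(x_i)|+|f'(x_{i+1})|-2|f'(m_i)|\Big)\int_0^1\Big(1-\tfrac{t}{q}\Big)t^{1/q}\,dt+|f'(m_i)|\right].
\]

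It remains to collapse the bracket to $\tfrac12\big(|f'(x_i)|+|f'(x_{i+1})|\big)$. Put $A_i=|f'(x_i)|+|f'(x_{i+1})|$, $M_i=|f'(m_i)|$ and $C=\int_0^1(1-t/q)t^{1/q}\,dt=\frac{q}{q+1}-\frac{1}{2q+1}$; the bracket equals $C A_i+(1-2C)M_i$. Midpoint convexity of $|f'|$ gives $2M_i\le A_i$, so — once one knows $1-2C\ge0$ — the bracket is $\le C A_i+\tfrac12(1-2C)A_i=\tfrac12 A_i$. Substituting back, multiplying by $x_{i+1}-x_i$, and summing over $i$ gives
\[
|R(f,K)|\le\frac{1}{2^{\frac{2p+1}{p}}}\sum_{i=0}^{n-1}\frac{(x_{i+1}-x_i)^2}{2}\big(|f'(x_i)|+|f'(x_{i+1})|\big),
\]
which is $(\ref{pr1})$.

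The delicate point — and the one I would watch — is precisely that last collapse: the inequality $M_i\le A_i/2$ may be used in the needed direction only after checking that the coefficient $1-2C$ is nonnegative, i.e. that $\int_0^1(1-t/q)\,t^{1/q}\,dt\le\tfrac12$; were it negative, one could only throw away $M_i\ge0$, which is too wasteful. So I expect the sign of $1-2\int_0^1(1-t/q)t^{1/q}\,dt$ to be the main obstacle. Everything else — the splitting of $R(f,K)$, the triangle inequality, the reduction of $(h-(\alpha,m))$-convexity to ordinary convexity, the evaluation of the elementary integral, and the final summation — is routine.
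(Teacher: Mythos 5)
Your overall architecture matches the paper's: split $R(f,K)$ over the cells of $K$, apply Corollary \ref{cor2} with $h(t)=t$, $\alpha=1$, $m=1$ on each $[x_i,x_{i+1}]$, collapse the bracket using $|f'(m_i)|\le\tfrac12\big(|f'(x_i)|+|f'(x_{i+1})|\big)$, and sum. But the point you yourself flag as ``the main obstacle'' is a genuine gap, and it does not go away: you keep $q=\tfrac{p}{p-1}$, and your collapse requires $1-2C\ge 0$ with $C=\int_0^1\bigl(1-\tfrac{t}{q}\bigr)t^{1/q}\,dt=\tfrac{q}{q+1}-\tfrac{1}{2q+1}$. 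A direct computation shows $C\le\tfrac12$ is equivalent to $2q^2-3q-3\le 0$, i.e. $q\le\tfrac{3+\sqrt{33}}{4}\approx 2.19$, equivalently $p\gtrsim 1.84$. For example at $p=\tfrac32$ (so $q=3$) one gets $C=\tfrac34-\tfrac17=\tfrac{17}{28}>\tfrac12$, the coefficient $1-2C$ is negative, and since convexity gives no useful lower bound on $|f'(m_i)|$ the best you can do is discard $M_i\ge0$, leaving the constant $C>\tfrac12$ in place of the claimed $\tfrac12$. So your argument, as written, proves \eqref{pr1} only for $p$ roughly above $1.84$, whereas the proposition is asserted for all $p>1$.

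The paper avoids this by specializing Corollary \ref{cor2} with $q=1$ (exactly as in Propositions \ref{P1}--\ref{P4}): then the integral is $\int_0^1(1-t)t\,dt=\tfrac16$, the bracket becomes $\tfrac16A_i+\tfrac23M_i\le\tfrac12A_i$ by midpoint convexity, and one obtains the per-cell estimate \eqref{pre1} with denominator $2^{\frac{3p+1}{p}}$, which after summation is \eqref{pr1}. (You may object that taking $q=1$ while retaining a finite $p$ in $2^{\frac{2p+1}{p}}$ decouples $q$ from its H\"older conjugate role --- that is a looseness of the paper itself --- but it is the step that makes the collapse work uniformly; your conjugate-exponent version needs either this specialization or a restriction on $p$, and in its present form the proof of the key sign condition is both missing and false in part of the stated range.) The remaining ingredients of your proposal --- the decomposition of $R(f,K)$, the triangle inequality, the observation that convexity of $|f'|$ gives convexity of $|f'|^q$, and the final summation --- are correct and agree with the paper.
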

\begin{proof}
By applying subdivisions $[x_i, x_{i+1}]$ of the division $k$ for $i=0, 1, 2, ..., n-1$ on Corollary \ref{cor2} setting $h(t)=t$, $\alpha=1$, $m=1$ and $q=1$ taking into account that $|f'|$ is convex, we have
\begin{eqnarray}\label{pre1}
&&\!\!\!\!\!\!\!\!\!\!\!\!\!\!\!\!\!\!\!\!\!\!\!\!\!\!\left|\frac{1}{x_{i+1} - x_i}\!\int_{x_i}^{x_{i+1}}\!\!f(x)dx - f\left(\frac{x_{i+1}+x_i}{2}\right)\right| \leq\frac{x_{i+1}-x_i}{2^{\frac{3p+1}{p}}}\left(\left|f'(x_{i+1})\right|+\left|f'(x_i)\right|\right)
\end{eqnarray}
Taking sum over $i$ from $0$ to $n-1$, we get
\begin{eqnarray}\label{pre2}
&&\!\!\!\!\!\!\!\!\!\!\!\!\!\!\!\nonumber\left|\int_a^b \!\!f(x)dx - \!S(f, K)\right|  =\left|\!\sum_{i=0}^{n-1}\left\{\!\!\int_{x_i}^{x_{i+1}}f(x)dx\! - \! f\left(\!\frac{x_{i+1}+x_i}{2}\right)\!\left(x_{i+1}-x_i\right)\right\}\right|\\&&\indent\indent\indent\indent\indent\indent\nonumber \leq \!\sum_{i=0}^{n-1}\left|\left\{\!\!\int_{x_i}^{x_{i+1}}f(x)dx\! - \!\!\left(x_{i+1}-x_i\right) f\left(\!\frac{x_{i+1}+x_i}{2}\right)\right\}\right|
\\&& \indent\indent\indent\indent\indent\indent\nonumber  =  \!\sum_{i=0}^{n-1}\left(x_{i+1}-x_i\right)\left|\left\{\!\frac{1}{\left(x_{i+1}-x_i\right)}\!\int_{x_i}^{x_{i+1}}f(x)dx \right.\right.\\&& \indent\indent\indent\indent\indent\indent\indent\indent\indent\indent\indent\indent\indent\indent\indent \left.\left. - f\left(\!\frac{x_{i+1}+x_i}{2}\right)\right\}\right|
\end{eqnarray}
By combining (\ref{pre1}) and (\ref{pre2}), we get (\ref{pr1}). Which completes the proof.
\end{proof}
\begin{prop}\label{P6}
Let $f: I \subseteq \mathbb{R} \rightarrow \mathbb{R}$ be a twice differentiable mapping on $I^o$ such  that $f'' \in L^1[a, b]$, where $a, b \in I$ with $a < b$ and $|f''|$ is $(\alpha, m)$-convex on $[a, b]$, then
\begin{eqnarray}
&&\!\!\!\!\!\!\!\!\!\!\!\!\!\!\nonumber\left|R(f, K)\right| \leq \frac{\beta(\alpha+2,2)}{(6)^{\frac{1}{p}}}\sum_{i=0}^{n-1} \frac{\left(x_{i+1}-x_i\right)^3}{2}\left(\left|f''(x_i)\right| +m\alpha(\alpha+5)\left|f''\left(\frac{x_{i+1}}{m}\right)\right|\right)
\end{eqnarray}
\end{prop}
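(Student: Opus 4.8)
The plan is to treat this proposition as the twice-differentiable, trapezoidal counterpart of Proposition \ref{P5}: localize one of the trapezoidal-type estimates of Section \ref{sec2} on each cell $[x_i,x_{i+1}]$ of the partition $K$ and then sum. First I would observe that taking $h(t)=t$ in the definition of an $(h-(\alpha,m))$-convex function recovers exactly $(\alpha,m)$-convexity, so the hypothesis that $|f''|$ is $(\alpha,m)$-convex lets us apply Corollary \ref{cor4} with $q=1$ (equivalently Theorem \ref{T4}) on every subinterval $[x_i,x_{i+1}]\subseteq[a,b]$, $i=0,1,\dots,n-1$. This yields, for each $i$, a bound for $\bigl|\tfrac{f(x_i)+f(x_{i+1})}{2}-\tfrac{1}{x_{i+1}-x_i}\int_{x_i}^{x_{i+1}}f\bigr|$ of the shape $\tfrac{(x_{i+1}-x_i)^2}{2\cdot 6^{1/p}}\bigl[\bigl(|f''(x_i)|-m|f''(\tfrac{x_{i+1}}{m})|\bigr)\int_0^1 t(1-t)t^{\alpha}\,dt+\tfrac m6|f''(\tfrac{x_{i+1}}{m})|\bigr]$.

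Next I would evaluate the remaining integral as a Beta value, $\int_0^1 t(1-t)t^{\alpha}\,dt=\int_0^1 t^{\alpha+1}(1-t)\,dt=\beta(\alpha+2,2)=\tfrac{1}{(\alpha+2)(\alpha+3)}$, and then combine the two terms carrying $|f''(\tfrac{x_{i+1}}{m})|$ using the elementary identity $\tfrac16-\beta(\alpha+2,2)=\tfrac{\alpha(\alpha+5)}{6(\alpha+2)(\alpha+3)}=\tfrac{\alpha(\alpha+5)}{6}\beta(\alpha+2,2)$. This collapses the bracket to $\beta(\alpha+2,2)\bigl(|f''(x_i)|+m\alpha(\alpha+5)|f''(\tfrac{x_{i+1}}{m})|\bigr)$ (up to the numerical normalization of the $|f''(\tfrac{x_{i+1}}{m})|$ term inherited from the Section \ref{sec2} inequality used), which is precisely where the coefficient $m\alpha(\alpha+5)$ comes from. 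Multiplying the resulting cell inequality through by $x_{i+1}-x_i$ turns the averaged integral into $\int_{x_i}^{x_{i+1}}f$ and upgrades the prefactor $(x_{i+1}-x_i)^2$ to $(x_{i+1}-x_i)^3$.

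Finally, since for the trapezoidal rule $R(f,K)=\int_a^b f-S(f,K)=\sum_{i=0}^{n-1}\bigl(\int_{x_i}^{x_{i+1}}f-\tfrac{f(x_i)+f(x_{i+1})}{2}(x_{i+1}-x_i)\bigr)$, I would apply the triangle inequality $|R(f,K)|\le\sum_{i=0}^{n-1}|\cdot|$ and substitute the per-cell bound from the previous step to obtain the stated estimate. I expect the main obstacle to be the constant bookkeeping in the second step: pairing the ``$-m|f''|$'' and ``$+\tfrac m6|f''|$'' contributions through the $\tfrac16-\beta(\alpha+2,2)$ identity so that the coefficient becomes exactly $m\alpha(\alpha+5)$, while simultaneously keeping track of which Section \ref{sec2} inequality is being localized (hence which numerical factor, here $6^{1/p}$, survives into the sum) and not dropping the passage $(x_{i+1}-x_i)^2\mapsto(x_{i+1}-x_i)^3$ produced by multiplying by the subinterval length.
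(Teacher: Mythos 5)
Your proposal is correct and follows essentially the paper's own route: the paper's proof is exactly the one-line instruction to repeat the argument of Proposition \ref{P5} with Corollary \ref{cor4}, $h(t)=t$ (and $q=1$), localized on each $[x_i,x_{i+1}]$ and summed by the triangle inequality, which is what you carry out in detail. Note only that your bookkeeping actually produces the coefficient $\tfrac{m\alpha(\alpha+5)}{6}\bigl|f''\bigl(\tfrac{x_{i+1}}{m}\bigr)\bigr|$, i.e.\ a bound sharper than (and hence implying) the stated one with $m\alpha(\alpha+5)$, so the discrepancy you flag as ``numerical normalization'' lies in the paper's statement, not in your argument.
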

\begin{proof}
Proof is very similar as that of Proposition \ref{P5}  by using corollary \ref{cor4} setting $h(t)=t$.
\end{proof}

\end{document}